\definecolor{refkey}{gray}{.75}
\definecolor{labelkey}{gray}{.75}
\newcommand{\N}{\mathbb N}
\newcommand{\pr}{\mathbb P}
\newcommand{\E}{\mathbb E}
\newcommand{\diff}{{\rm d}}
\newcommand{\sgn}{\mathop{\mathrm{sgn}}}
\newcommand{\ident}{{\mathchoice {\rm 1\mskip-4mu l} {\rm 1\mskip-4mu l}
{\rm 1\mskip-4.5mu l} {\rm 1\mskip-5mu l}}}
\renewcommand{\thesubfigure}{\arabic{subfigure}}
\renewcommand{\@thesubfigure}{\tiny Figure \thesubfigure: \space}
\renewcommand{\p@subfigure}{}
\newtheorem{teo}{Theorem}[section]
\newtheorem{lem}[teo]{Lemma}
\newtheorem{rem}[teo]{Remark}
\newtheorem{exmp}[teo]{Example}
\begin{document}

\title[Persistent and susceptible bacteria with individual deaths]
{Persistent and susceptible bacteria with individual deaths}

\author[F.~Zucca]{Fabio Zucca}
\address{F.~Zucca, Dipartimento di Matematica,
Politecnico di Milano,
Piazza Leonardo da Vinci 32, 20133 Milano, Italy.}
\email{fabio.zucca\@@polimi.it}

\begin{abstract}
The aim of this paper is to study two models for a bacterial population subject to
antibiotic treatments.
It is known that some bacteria are sensitive to antibiotics. These bacteria are in a
state called \textit{persistence} and each bacterium can switch from this state to
a non-persistent (or susceptible) state and back.
Our models extend those introduced in
\cite{cf:GaMaSch} by adding a (random) natural life cycle for each bacterium and
by allowing bacteria in the susceptible state to escape
the action of the antibiotics with a fixed probability
$1-p$ (while every bacterium in a persistent state survives with
probability $1$). In the first model we ``inject'' the antibiotics in the system at fixed,
deterministic times while in the second one the time intervals are random.
We show that, in order to kill eventually the whole bacterial population, these
time intervals cannot be ``too large''. The maximum admissible length is increasing with respect to
$p$ and it decreases rapidly when $p<1$. 
While in the case $p=1$ switching back and forth to the persistent state is the only chance of surviving
for bacteria, when $p<1$ and the death rate in the persistent case is positive then switching state is
not always a good strategy from the bacteria point of view.
\end{abstract}

\date{}
\maketitle
\noindent {\bf Keywords}: bacteria persistence, multitype branching process, random environment.

\noindent {\bf AMS subject classification}: 60K35, 60K37.

\baselineskip .6 cm

\section{Introduction}
\label{sec:intro}
\setcounter{equation}{0}

It is well known that some bacteria are not sensitive to antibiotics (see \cite{cf:Bigger44}).
This state, called \textit{persistence}, is not permanent and each bacterium can switch during
its lifetime from persistent to susceptible and back to persistent many times
(see for instance \cite{cf:KKBL05, cf:Levin04}).
In the persistent state it does not reproduce, while in the susceptible state it breeds but it
is also vulnerable to antibiotics.

Two models for this phenomenon have been introduced in \cite{cf:GaMaSch}.
The aim of this paper is to extend these  two models by adding (1) a life cycle for each bacterium
(that is, individual deaths) and (2) a possibly positive probability $1-p$ for each
bacterium in the susceptible state to survive the action of the antibiotic.
In our models each bacterium has an independent random lifetime represented by
two exponentially distributed random variables with parameters $d_n$ and $d_r$
for the susceptible state and the persistent state respectively. At certain times, that we
call \textit{mass killing times} or simply \textit{killing times}, an antibiotic is injected in the system; the time intervals are
deterministic and equally spaced in the first model and random in the second one.
The action of the antibiotic does not affect the persistent population
but it kills each bacterium in the susceptible state independently with probability $p \in [0,1]$;
$p=0$ means that there is no target for the antibiotic in the bacterial genome,
$p=1$ means that the antibiotic performs a ``perfect'' mass-killing action in the susceptible state population.
The models in \cite{cf:GaMaSch} can be recovered by setting $d_n=d_r=0$ and $p=1$.

For some values of the parameters (see Section~\ref{subsec:meanfield} for details),
the system dies out almost surely even without the action of the antibiotics; thus,
we need to study only the so-called supercritical case, which is the case when
the natural evolution of the system allows survival with positive probability.

In the deterministic killing times case we suppose that the mass-killings occur at times
$S_n:=n T$ where $T>0$.
We show that, for every value of the parameters, if the interval $T$ between each killing time is too large
(strictly larger than a critical value $T_c \in (0,+\infty)$), the bacterial population has a positive
probability of survival, while if $T \le T_c$ there is almost sure extinction (see Theorem~\ref{th:tcritical}). 
In particular we are interested in the dependence of $T_c$ from $p$: as
$p$ converges to $0$, the critical time interval length $T_c$ converges to $0$ as well.
Thus there is not a positive minimum time interval which guarantees the extinction of
the bacterial population for all $p\in (0,1]$. When the death rate $d_r$ is positive,
for some set of parameters it might happen that switching from the susceptible to the resistant state
is not a good strategy from the bacteria point of view, since it results in a longer critical time
$T_c$. 

In the random killing time case we suppose that the mass-killings are separated by a sequence
of independent random time intervals $\{T_n\}_{n \ge 1}$; this sequence is i.i.d.~and the distribution
is given by a probability measure $\mu_\beta$, where $\{\mu_\beta\}_{\beta >0}$
is a one-parameter stochastically increasing family of probability measures satisfying
some mild conditions (see Section~\ref{sec:random} for details).
In this case we have two randomizations, so to speak: first we choose a realization $\xi$ of the
sequence $\{T_n\}_{n \ge 1}$ (we call $\xi$ a \textit{realization of the environment}) 
and then we have a random evolution of the system with killing times given by $\xi$.
We show that if $\beta$ is  large enough ($\beta > \beta_c^1(p)$) the population survives with positive probability
for almost every realization of the environment (see Theorem~\ref{th:survivalRT}).
On the other hands if $\beta$ is small enough ($\beta < \beta_c^2(p)$) then the population
dies out almost surely for almost every realization of the environment (see Theorem~\ref{th:extinctionRT}). 
As in the deterministic case,
%under suitable conditions,
$\lim_{p \to 0} \beta_c^2(p)=0$. Roughly speaking,
since the expected time between two
consecutive mass-killings is a nondecreasing function of $\beta$, we have that,
in order to kill almost surely the bacterial population, the expected time between
two injection of antibiotics in the system cannot be too large.
According to Example~\ref{exmp:ex1},
it might happen that $ \beta_c^2(p)< \beta_c^1(p)$.
In particular, if $\mu_\beta \sim \mathrm{Exp}(1/\beta)$ (where $\mathrm{Exp}$ is the exponential
distribution) then $\beta^1_c(p)=\beta_c^2(p)=:\beta_c(p)$
%there exists $\beta_c(p) \in (0,+\infty)$ such that the population dies out a.s.~if and only if $\beta \le \beta_c(p)$ 
and
%under suitable conditions,
 $\lim_{p \to 0} \beta_c(p)=0$.

\section{The dynamics}
\label{sec:discrete-continuous}

This is a modification of the model described in \cite{cf:GaMaSch}
with the introduction of individual deaths for each type of
bacteria; indeed, it is quite natural to assume that each bacterium has
its own life cycle in the absence of an antibiotic treatment.
Another addition to the dynamics is the possibility for some
susceptible bacteria to survive (with a fixed probability $1-p$ where $p \in [0,1]$) the
action of the antibiotics.
We denote by $N_t$ and $R_r$ the number of susceptible and persistent bacteria
respectively.
This is a 2-type process in continuous time, with the following (nonnegative)
rates:
\begin{equation} \label{eq:rates}
 \begin{matrix}
   (N_t,R_t) &\to & (N_t+1,R_t)  & \textrm{at rate } \lambda N_t \\
 (N_t,R_t) &\to & (N_t-1,R_t+1)  & \textrm{at rate } a N_t \\
  (N_t,R_t) &\to & (N_t+1,R_t-1)  & \textrm{at rate } b R_t \\
  (N_t,R_t) &\to & (N_t-1,R_t)  & \textrm{at rate } d_n N_t \\
  (N_t,R_t) &\to & (N_t,R_t-1)  & \textrm{at rate } d_r R_t. \\
 \end{matrix}
\end{equation}
We recall that a change of state takes place at rate $\alpha$ if it takes place
after a random exponentially distributed time intervals $T \sim \mathrm{Exp}(\alpha)$:
due to the lack of memory of the exponential distribution, this means that whenever we start looking
at the system, the random time to wait before the change of state is a $\mathrm{Exp}(\alpha)$-distributed random variable.
In particular the probability of the change of state in an interval of time $[t,t+\Delta t]$ is
asymptotic to $\alpha \cdot \Delta t$ as $\Delta t$ goes to $0$.
A more precise construction of the model is given in the proof of Theorem~\ref{th:tcriticalrandom}.
Roughly speaking, we can imagine that each particle has five clocks which ring at
exponentially distributed time intervals with parameters $\lambda$, $a$, $b$, $d_n$ and $d_r$
(the clocks are independent).
When a particle is in a susceptible state we have different possibilities: if its $\mathrm{Exp}(\lambda)$-clock rings it breeds,
if its $\mathrm{Exp}(d_n)$-clock rings it dies and if its $\mathrm{Exp}(a)$-clock rings it changes into a persistent state
(it is not affected by the other clocks).
On the other hand when a particle is in a persistent state we observe the following behaviors:
if its $\mathrm{Exp}(d_r)$-clock rings it dies and if its $\mathrm{Exp}(b)$-clock rings it moves to a susceptible state
(and, again, it is not affected by the other clocks).

When $a=0$ and $b=0$ the two populations are completely separated,
the $N$-population is a branching process with mass killing and the $R$-population is
stable if $d_r=0$ or dying out if $d_r>0$.
If $a>0$ and $b=0$ then  the $N$-population is a branching process with mass killing and
individual death rate $a+d_n$, while the $R$-population %is a population which 
survives
if and only if either $d_r=0$ or the $N$-population survives.
The interesting case is $b>0$.
We note that this process is not monotone with respect to the parameters $a$ and $b$;
on the other hand, it is monotone with respect to the other parameters and to the initial condition.

Without the mass deaths caused by the antibiotics, the process has a
discrete-time branching random walk counterpart (similar to the one described in \cite{cf:Z1, cf:BZ4}).
When the antibiotic is injected in the system the dynamics is the following
\[
 (N_t,R_t) \to (\, B(N_t, 1-p)\, , R_t)
\]
which means that the number of surviving susceptible bacteria is a
binomial-distributed random variable; thus, at a killing
time each  susceptible bacterium is killed (independently from the others)
with probability $p \in [0,1]$.
%At each antibiotic injection we have 
After a mass killing %and we start a new
the system performs a new evolution starting
from the survivors. If we consider just the surviving population
at these mass killing times,
we have a discrete-time process; it turns out to be a 2-type branching process
or a 2-type branching process in random environment depending on our choice
of the killing times (deterministic or random). Our choice will be either an increasing sequence of killing times
$\{S_n\}_{n \ge 1}$ where $S_n=n T$ (for a fixed $T>0$) or $S_{n}=\sum_{i=1}^n T_n$ where $\{T_n\}_{n \ge 1}$ is an
i.i.d.~sequence ($S_0:=0$).

According to \cite{cf:AthNey, cf:Harris63, cf:BZ4} the long-term behavior
of this discrete-time branching process depends only of its first-moment matrix $M=(m_{ij})_{i,j=1,2}$.
where $m_{ij}$ is the expected number of offsprings of type $j$ from a particle of type $i$
(see for instance \cite{cf:Z1, cf:BZ4}). In order to compute $M$ we need
to consider the \textit{mean field model} (this is done in Secton~\ref{subsec:meanfield}). 
The main results on the deterministic case and the random case are in Sections~\ref{sec:determ} and \ref{sec:random}
respectively.
We note that all these results  hold for any finite (non-void)
initial condition.
All the proofs and technical Lemmas can be found in Section~\ref{sec:proofs}. 

\subsection{Mean field model}\label{subsec:meanfield}

This section is a useful exercise which allows us to obtain
some explicit expressions the we need in the sequel.
The linear system of equations for the expected values
$(n_t, r_t):=\E[(N_t,R_t)]$ is
\begin{equation}\label{eq:systdiff}
 \begin{cases}
\frac{\diff}{\diff t} n_t =(\lambda-a-d_n) n_t +b r_t  \\
\frac{\diff}{\diff t} r_t =a n_t -(b+d_r) r_t,  \\
\end{cases}
\end{equation}
where $b>0$ and $\lambda,a, d_n, d_r \ge0$.
The eigenvalues $x^+$, $x^-$ (where $x^+ \ge x^-$) of the corresponding matrix
\[
 A:=\begin{pmatrix}
  \lambda-a-d_n & b \\
a & -(b+d_r) \\
 \end{pmatrix}
\]
are the solutions of the equation
\begin{equation}\label{eq:systdiff2}
h(x):=x^2+x(b+d_r-\lambda+a+d_n)-((b+d_r)(\lambda-d_n)-ad_r)=0.
\end{equation}
%($x^\pm$ are defined in the obvious way).
We note immediately that, since $h(-(b+d_r))=h(\lambda-a-d_n)=-ab \le 0$, then
the eigenvalues are always real numbers and
$x^- \le \min(-(b+d_r),\lambda-a-d_n)$ % and the equality holds if and only if $ab=0$.
and $x^+ \ge \max(-(b+d_r),\lambda-a-d_n)$
(one of equalities holds if and only if both hold, that is, if and only if
$ab=0$). %\lambda-a-d_n$.
Moreover, the basic branching process theory tells us that if the maximum eigenvalue $x^+ \le 0$
then we have almost sure (spontaneous) extinction.
Hence if the determinant of the matrix $h(0)\equiv -(b+d_r)(\lambda-d_n)+ad_r \ge 0$ we have extinction for all $p$ and for
any choice of $\{T_n\}_{n \ge 1}$ (even when $T_1=+\infty$).
%
% Indeed, since $b>0$, if $(b+d_r)(\lambda-d_n) \le ad_r$ then $\lambda-d_n \le a$ hence 
% $b+d_r-\lambda+a+d_n>0$ hence there are no strictly positive solutions.
%
From now on we assume
\begin{equation}\label{eq:assumpdet}
 (b+d_r)(\lambda-d_n)-ad_r > 0,
\end{equation}
that implies immediately
$x^+>0$; % (and we already know that $x^- \le -(b+d_r)$;
hence $x^+>x^-$. A corresponding pairs of eigenvectors is
$Z=(1,a/(b+d_r+x^+))$, $C=(1, a/(b+d_r+x^+))$.
%
% Let us denote by $Z=(z_1,z_2)$ and $C=(c_1,c_2)$ the eigenvectors; we have immediately that
% $a z_1=z_2(b+d_r+x^+)$ and $a c_1=c_2(b+d_r+x^-)$. If we fix $z_1=c_1=1$ then we have
% \[
%  Z=(1,a/(b+d_r+x^+)), \quad C=(1, a/(b+d_r+x^+))
% \]
% if $a >0$ and, if $a=0$
% \[
%  Z=(1,0), \quad C=(b, -(\lambda-d_n +b+d_r))
% \]
% corresponding to the eigenvalues $\lambda-d_n$ and $-(b+d_r)$ respectively.
%
The generic solution can be written as
\[
\begin{pmatrix}
n(t) \\
r(t)
\end{pmatrix}=
e^{At}
\begin{pmatrix}
n(0) \\
r(0)
\end{pmatrix}
\]
where $e^B:= \sum_{i=0}^\infty B^i/i!$ for every matrix $B$ and $(n(0),r(0))$ is the initial state.
The explicit computations of $e^{At}$ are easy: %requires the inversion of the matrix $(Z,C)$. Remember
one simply needs to evaluate the solution of the system starting from $(1,0)$ and $(0,1)$.
Note that $x^++x^-=\lambda-b-d_r-d_n-a$ and that $x^+x^-=ad_r-(b+d_r)(\lambda-d_n)$.
%If $a>0$ then
We have
\begin{equation}\label{eq:fundsolutions}
\begin{split}
 \begin{pmatrix}
  \widetilde n(t) \\
  \widetilde r(t)
 \end{pmatrix}&:=
\begin{pmatrix}
 \frac {b+d_r+x^+}{x^+- x^-}\\
 \frac {a}{x^+- x^-}
\end{pmatrix}
 e^{t x^+}
-
\begin{pmatrix}
 \frac {b+d_r+x^-}{x^+- x^-}\\
 \frac {a}{x^+- x^-}
\end{pmatrix}
 e^{t x^-}, \\
 \begin{pmatrix}
  \bar n(t) \\
  \bar r(t)
 \end{pmatrix}&:=
% \begin{pmatrix}
%  -\frac { \left( b+d_r+ x^- \right)  \left( b+ d_r
% +x^+ \right) }{a \left(x^+- x^-\right) }
% \\
%  -\frac { b+d_r+ x^-}{x^+- x^- }
% \end{pmatrix}
%  e^{t x^+}
% +
% \begin{pmatrix}
%  \frac { \left( b+d_r+ x^- \right)  \left( b+ d_r
% +x^+ \right) }{a \left(x^+- x^-\right) }
% \\
%  \frac { b+d_r+ x^+}{x^+- x^- }
% \end{pmatrix}
%  e^{t x^-} \\
% &:=
\begin{pmatrix}
 \frac { b}{ \left(x^+- x^-\right) }
\\
 -\frac { b+d_r+ x^-}{x^+- x^- }
\end{pmatrix}
 e^{t x^+}
+
\begin{pmatrix}
- \frac {b}{\left(x^+- x^-\right) }
\\
 \frac { b+d_r+ x^+}{x^+- x^- }
\end{pmatrix}
 e^{t x^-}
\end{split}
\end{equation}
(remember that $b+d_r+ x^-<0$).
% If $a=0$ then we have
% \[
% \begin{split}
%  \begin{pmatrix}
%   \widetilde n(t) \\
%   \widetilde r(t)
%  \end{pmatrix}&:=
% \begin{pmatrix}
%  1\\
%  0
% \end{pmatrix}
%  e^{t (\lambda-d_n)}
% \\
%  \begin{pmatrix}
%   \bar n(t) \\
%   \bar r(t)
%  \end{pmatrix}&:=
% \begin{pmatrix}
%  \frac {b}{\lambda-d_n+b+d_r}\\
% 0
% \end{pmatrix} e^{t (\lambda-d_n)}
% +
% \begin{pmatrix}
%  -\frac {b}{\lambda-d_n+b+d_r}\\
% \\
%  1
% \end{pmatrix}
%  e^{-t (b+d_r)}.
% \end{split}
% \]
% The case $a=0$ is less interesting; henceforth we assume $a>0$.
We note that $\lim_{t \to \infty} \bar n(t)=\lim_{t \to \infty} \widetilde n(t)=+\infty$; if, in addition,
$a>0$ %otherwise $b+d_r+x^-=0$
then $\lim_{t \to \infty} \bar r(t)=\lim_{t \to \infty} \widetilde r(t)=+\infty$.
Hence
\begin{equation}\label{eq:prefirstmomentmatrix}
e^{At}=
\begin{pmatrix}
\widetilde n(t)  & \bar n(t) \\
\widetilde r(t) &   \bar r(t)\\
  \end{pmatrix}
\end{equation}
note that $e^{At}e^{As}=e^{As}e^{At}=e^{A(t+s)}$.

%\section{Technical definitions}\label{sec:technical}

\section{Deterministic mass killing times}\label{sec:determ}

Between killing times, the bacterial population evolves randomly according to the rates \eqref{eq:rates},
each time starting from the set of survivors of the previous killing time.
We choose fixed time intervals $T_n=T$, where $T>0$; hence
mass killings occur at $S_n=n T$. We follow the strategy of \cite{cf:GaMaSch}.
 For all $n \ge 0$, we let the system evolve between $S_{n-1}$ and $S_n$ and we
count the number of survivors of each type at time $S_n$.
This is a 2-type branching process, whence we have survival if and only if
the Perron-Frobenius eigenvalue $\gamma_T^+$ of its first-moment matrix
\begin{equation}\label{eq:1stmomentmatrix}
 M(T):=\begin{pmatrix}
  (1-p)\widetilde n(T) & (1-p) \bar n(T)\\
\widetilde r(T) & \bar r(T)\\
 \end{pmatrix} \equiv
\begin{pmatrix}
  1-p & 0\\
0 & 1\\
 \end{pmatrix} e^{AT}
\end{equation}
satisfies $\gamma_T^+>1$
(see \cite{cf:BZ4}). Note that the entries of the $j$-th column of the matrix
are the average number of survivals after a mass killing at time $T$ starting from
one particle of type $j$ ($j=1$ being a susceptible particle, $j=2$ being a persistent
particle).

The following theorem holds for any (non-void) finite initial condition and the critical time $T_c(p)$ does not
depend on the initial condition (clearly it depends on all the parameters of the system,
even though, here, we emphasized only the dependence on $p$).

\begin{teo}\label{th:tcritical}
 Let $\lambda, a, d_n, d_r \ge 0$, $b >0$ such that equation~\eqref{eq:assumpdet} holds and $a+1-p>0$. %$(b+d_r)(\lambda-d_n)-ad_r>0$.
For any $p>0$ there exists $T_c(p) \in (0, +\infty)$ such that the process dies out almost surely if and only if $T\le T_c(p)$.
Moreover $p \mapsto T_c(p)$ is a continuous, strictly increasing function and $\lim_{p \to 0} T_c(p)=0$.
\end{teo}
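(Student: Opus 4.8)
The plan is to reduce the survival/extinction dichotomy to the location of the zeros of a single scalar function. By \cite{cf:BZ4} the process survives with positive probability if and only if the Perron--Frobenius eigenvalue $\gamma_T^+$ of $M(T)$ satisfies $\gamma_T^+>1$. I set
\[
F(T):=\det\bigl(I-M(T)\bigr)=1-\mathrm{tr}\,M(T)+\det M(T)=(1-\gamma_T^+)(1-\gamma_T^-),
\]
and note that inspection of the characteristic polynomial gives $\gamma_T^+>1$ if and only if either $F(T)<0$, or $F(T)\ge 0$ and $\mathrm{tr}\,M(T)>2$. The goal is to prove that $\{T>0:\gamma_T^+>1\}$ is a half-line $(T_c,\infty)$, and then to analyse the dependence of $T_c$ on $p$.

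First I would make $F$ explicit. From \eqref{eq:1stmomentmatrix}, $\det M(T)=(1-p)\det e^{AT}=(1-p)e^{(x^++x^-)T}$, while \eqref{eq:fundsolutions} gives $\mathrm{tr}\,M(T)=(1-p)\widetilde n(T)+\bar r(T)=A_+e^{x^+T}+A_-e^{x^-T}$, where a short computation yields $A_+=\frac{(1-p)(b+d_r+x^+)-(b+d_r+x^-)}{x^+-x^-}$ and $A_-=\frac{(b+d_r+x^+)-(1-p)(b+d_r+x^-)}{x^+-x^-}$; using $b+d_r+x^+>0>b+d_r+x^-$, both are strictly positive. Hence
\[
F(T)=1-A_+e^{x^+T}-A_-e^{x^-T}+(1-p)e^{(x^++x^-)T}.
\]
I would then record three boundary facts: $F(0)=1-(2-p)+(1-p)=0$; a direct differentiation gives $F'(0)=p(b+d_r)>0$; and since $x^-<0<x^+$ the term $-A_+e^{x^+T}$ dominates, so $F(T)\to-\infty$. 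Thus $F>0$ on a right neighbourhood of $0$ and $F<0$ for $T$ large.

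The heart of the argument is to show $F$ has exactly one zero in $(0,\infty)$. I view $F$ as an exponential sum with exponents among $x^-,\,0,\,x^++x^-,\,x^+$ and coefficients $-A_-,\,1,\,1-p,\,-A_+$; ordered by exponent the sign pattern is $(-,+,+,-)$, with exactly two sign changes (the precise ordering of the two middle exponents is immaterial, the count being two in every case). By the generalized Descartes rule for exponential sums (Pólya--Szegő), $F$ has at most two real zeros counted with multiplicity. Since $T=0$ is a zero and $F'(0)\ne0$ it is simple, so at most one further real zero remains; as $F$ passes from positive values to $-\infty$ there is at least one positive zero, hence exactly one, which I call $T_c=T_c(p)$, and it is simple ($F'(T_c)<0$). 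Consequently $F>0$ on $(0,T_c)$ and $F<0$ on $(T_c,\infty)$. On $(0,T_c)$ no eigenvalue can equal $1$ (that would force $F=0$), and near $0$ both eigenvalues are $<1$ (indeed $\gamma_T^+=1-(b+d_r)T+o(T)$), so by continuity $\gamma_T^+<1$ throughout $(0,T_c)$; on $(T_c,\infty)$, $F<0$ forces $\gamma_T^-<1<\gamma_T^+$. This yields $T_c\in(0,\infty)$ and that the process dies out almost surely if and only if $T\le T_c$. This zero count is the step I expect to be the main obstacle, both because it is precisely what prevents $\gamma_T^+$ from oscillating across $1$ (note $\gamma_T^+$ is \emph{not} monotone, as it decreases at $T=0$) and because the sign-change rule must be applied carefully, including the degenerate cases $p=1$ (the $(1-p)$-term drops, leaving pattern $(-,+,-)$, still two changes) and $a=0$ (where $M(T)$ is triangular and $T_c=-\log(1-p)/(\lambda-d_n)$ can be computed explicitly, confirming every claim).

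Finally, the qualitative properties of $p\mapsto T_c(p)$ follow from the implicit function theorem applied to $F(T_c(p),p)=0$: since the zero is simple, $\partial_T F(T_c,p)=F'(T_c)<0$, so $T_c$ is a smooth, hence continuous, function of $p$ on $(0,1)$, with $T_c'(p)=-\partial_pF/\partial_TF$ (the case $p=1$ being handled as above). A direct computation gives $\partial_pF(T,p)=\widetilde n(T)-e^{(x^++x^-)T}$, and the same exponential-sum argument applied to this difference (exponents $x^-<x^++x^-<x^+$ carrying coefficients of signs $(+,-,+)$, a single simple zero at $T=0$) shows it is $>0$ for every $T>0$; hence $T_c'(p)=-\partial_pF/\partial_TF>0$ and $T_c$ is strictly increasing. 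For the limit, monotonicity gives $T_c(p)\downarrow L\ge0$ as $p\downarrow0$, and joint continuity yields $F(L,0)=0$; but $F(T,0)=(1-e^{x^+T})(1-e^{x^-T})<0$ for every $T>0$, forcing $L=0$, i.e. $\lim_{p\to0}T_c(p)=0$.
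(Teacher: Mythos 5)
Your proof is correct, and its skeleton coincides with the paper's: both reduce survival to the criterion $\gamma_T^+>1$ via \cite{cf:BZ4}, and both study the same scalar function --- your $F(T)=\det(I-M(T))$ is exactly the paper's $F_{T,p}(1)$ from \eqref{eq:1stmoment2} --- with the same three boundary facts $F(0)=0$, $F'(0)=p(b+d_r)>0$, $F(T)\to-\infty$, the same strategy of proving $\partial_p F>0$ to get strict monotonicity of $T_c$, and the same observation that $F(\cdot,0)<0$ forces $T_c(p)\to 0$. Where you genuinely diverge is the engine used for the two key inequalities. For uniqueness of the positive zero, the paper proves unimodality of $t\mapsto F_{t,p}(1)$: it factors $\frac{\diff}{\diff t}F_{t,p}(1)=e^{t(x^++x^-)}L(t,p)$ and shows that the bracketed cofactor $L$ is strictly decreasing in $t$ with $L(0,p)=p(b+d_r)>0$ and $L\to-\infty$, so $F$ increases then decreases and can vanish only once on $(0,\infty)$ (note the paper's displayed definition of $L$ contains a typo: $L$ is the bracketed factor of the derivative, not $F_{t,p}(1)e^{-t(x^++x^-)}$). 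You instead invoke the generalized Descartes rule for exponential sums (P\'olya--Szeg\H{o}): the coefficient sequence of $F$ shows two sign changes in every configuration of the exponents, so $F$ has at most two real zeros with multiplicity, one of which is the simple zero at $T=0$. For $\partial_p F>0$, the paper uses a convexity (Jensen-type) inequality, $e^{tx^+}h+e^{tx^-}(1-h)\ge e^{t(hx^++(1-h)x^-)}$ with $h=(b+d_r+x^+)/(x^+-x^-)$, whereas you reuse the same Descartes count. Your route buys uniformity (one tool settles both steps and covers the degenerate cases $p=1$ and $x^++x^-=0$ without case analysis) and it delivers simplicity of the zero, hence smoothness of $T_c$ by the implicit function theorem, for free; the paper's route is more elementary and self-contained, needing only calculus and a convexity bound.

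Two points to tighten. First, your justification ``$b+d_r+x^+>0>b+d_r+x^-$'' is not strict when $a=0$: in that case $b+d_r+x^-=0$, and $A_+>0$ then holds only because the hypothesis $a+1-p>0$ forces $p<1$; say this explicitly. Second, the criterion ``survival iff $\gamma_T^+>1$'' presupposes that $M(T)$ has strictly positive entries, which fails precisely when $a=0$ (then $M(T)$ is triangular, i.e.\ the process is reducible), and the theorem is asserted for every finite non-void initial condition. Your explicit computation of $T_c=-\log(1-p)/(\lambda-d_n)$ in that case is right, but it should be accompanied by the short probabilistic argument the paper gives in its closing paragraph: since $b>0$, a persistent particle produces a susceptible one with positive probability, so the survival dichotomy is the same for every finite non-void initial state.
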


Requiring the inequality \eqref{eq:assumpdet} is quite natural, since if it does not hold, the bacterial
population would become extinct almost surely even without the action of the antibiotic.
Analogously, if $a+1-p=0$ (that is, $p=1$ and $a=0$) there cannot be survival since at the first killing time
the whole susceptible bacterial population is killed and the persistent population decreases (since
they cannot reproduce without switching to susceptible state and there is no switching back from susceptible to persistent).
%Finally we note that,
If $p<1$ there can be survival even when switching form susceptible to persistent state is forbidden (that is, $a=0$).

Since $p \mapsto T_c(p)$ is increasing we have that if $t >T_c(1)$ then there is survival with
positive probability for all $p$, while if $t\in(0,T_c(1))$ there is a critical value $p_c(t) \in (0,1)$ such that
there is almost sure extinction if and only if $p \ge p_c(t)$. 

From the bacteria point of view, a winning strategy is to make $T_c$ as small as possible.
Here are three plots of $T_c$ where $a=b=0.01$ and $d_n=d_r=0.025$:
the first one on the left is the function $(\lambda,p) \mapsto T_c$,
the second one is the function $p \mapsto T_c$ (where $\lambda=3.7$) which show
a fast increasing of $T_c$ w.r.~to $p$ as $p$ is close to $1$. This means that if the antibiotic is slightly less
than perfectly efficient (that is, $p <1$) then the maximum admissible time interval $T_c$ to kill the bacterial population
is rapidly decreasing as $p$ decreases.
This is shown also by the third plot, which represents
the functions $\lambda \mapsto T_c$ for $p=1$ (red solid line), $p=0.95$ (yellow dot-dashed line) and
$p=0.9$ (green dashed line).

\begin{center}
\includegraphics[width=6.2truecm]{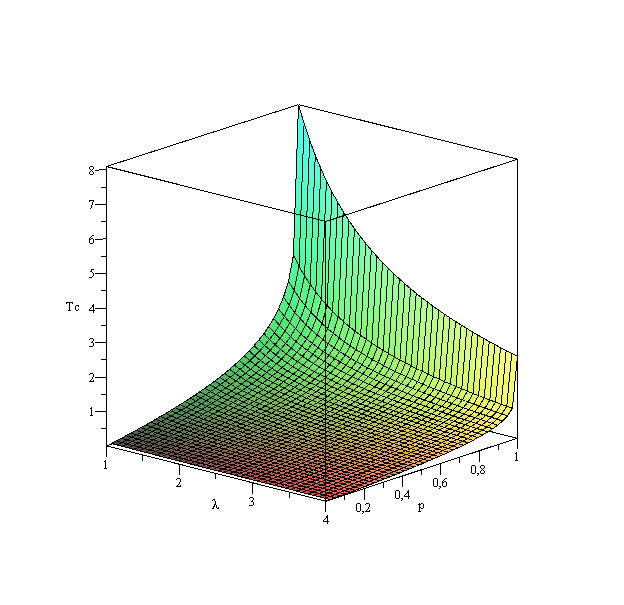}
\includegraphics[width=5truecm]{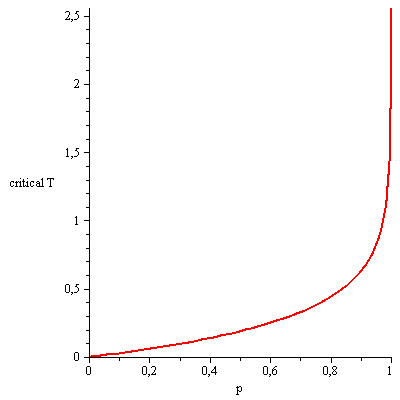}
\includegraphics[width=5truecm]{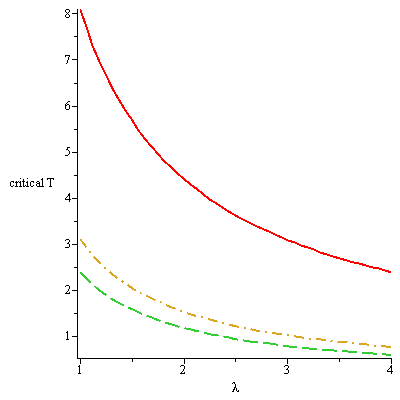}
\end{center}

Let us discuss briefly the behavior of $T_c$ w.r.~to $a$, in order to understand if $a>0$ is a better
strategy than $a=0$ from the bacteria point of view. First of all, when $p=1$ the only hope for survival for the
bacterial population is when $a>0$. On the other hand, of $p<1$ there might be a positive probability of survival
even if $a=0$, since a small fraction of susceptible bacteria may survive the action of the antibiotics. Hence in
this case, it is not trivial to decide whether $a>0$ is a better strategy than $a=0$ or not.
We note that, if $d_r>0$, then by equation~\eqref{eq:assumpdet} we have that $a \ge (\lambda-d_n)(b+d_r)/d_r$ implies
a.s.~extinction. More precisely one can prove that as $a \to (\lambda-d_n)(b+d_r)/d_r$ then $x^+ \to 0$ which
implies $T_c \to \infty$ (see the proof of Theorem~\ref{th:tcritical} for details) %. 
%Whence, if $d_r>0$, as $a \to (\lambda-d_n)(b+d_r)/d_r$ then 
and eventually the situation becomes 
less favorable for the bacteria.

On the other hand, if we rewrite equation~\eqref{eq:systdiff2} as $h(x)=(x+b+d_r)(x-\lambda+d_n)+a(x+d_r)$ 
we see immediately that, when $d_r=0$, for every fixed $x>0$ (resp.~$x<0$) $h$ is strictly increasing (resp.~decreasing)
w.r.~to $a>0$ and $h(x) \to +\infty$ (resp.~$h(x) \to -\infty$) as $a \to \infty$. 
This implies immediately that $x^-$ and $x^+$ are strictly decreasing w.r.~to $a$ and that 
$x^- \to -\infty$ and $x^+ \to 0$ as $a \to \infty$. Hence,
when $d_r=0$, it is easy to prove that $T_c \to 0$ as $a \to \infty$
(this can be done by checking that, for every fixed $t>0$,
the function $F_{t,p}(1)$, introduced in the proof of Theorem~\ref{th:tcritical}, is negative for all $a$ sufficiently
large).
Here are two series of plots: on the left is $d_r=0$ and on the right is $d_r>0$ while the other parameters, with the exception
of $p$, are fixed: $p$ equals $0.9$ (green dashed line), $0.95$ (yellow dot-dashed line), $0.99$ (blue dotted line)
and $1$ (red solid line).

\begin{center}
 \includegraphics[width=6truecm]{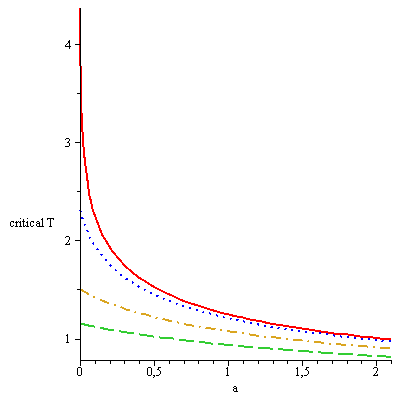}
\includegraphics[width=6truecm]{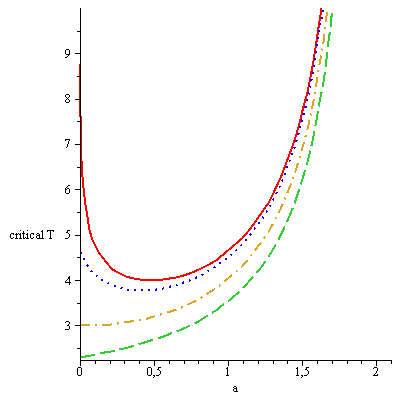}
\end{center}

We see that if $d_r=0$ then the best strategy for the bacterial population is to have $a$
as large as possible. If $d_r>0$ there is a critical value for $p$, above which, increasing
$a$ is a good strategy, up to a suitable value which minimizes $T_c$. Below the critical value
for $p$ the best situation for the bacterial population is $a=0$. 
As $a \to \infty$, eventually the situation becomes worse for the survival of the bacteria for all values
of $p<1$.

A more rigorous study of the behavior of $T_c$ with respect to $p$, $a$ and $b$ is possible
but it would exceed the aim of this paper.

\section{Random mass killing times}\label{sec:random}

%
% In the annealed approach every particle has a probability of giving birth to
% f children which is $\mu(f)= \int_\Omega \mu_\xi(f) \nu(\diff \xi)$
% Where $\xi represents the random environment. Clearly the expected number of children
% are the expected value with respect to the environment of the expected numbers of children
% given the environment. BUT IT'S NOT A BRW or a branching process, because particles in the same generation
% cannot choose an environment independently: they have the same environment!
% Example: 2 possible choices of environment 1/2+x^2/2 with probability 1/2 and
% 1/2 + x^4/2 with probability 1/2. Annealed law 1/2+x^2/4+x^4/4.
%If we have 2 particles in a generation and we know that it has 4 children, then the probability
% the other to have 2 children is 0 (the environment is the second one and it is the same).
% From a quenched point of view we have a time-dependent branching process, from an annealed point of view
% we have something which is stationary but it is not a branching process due to the lack of intra-generation independence
% (of course there is independence conditioned on the environment).
%

% \subsection{Case 1: no individual deaths}
% \label{subsec:noinddeathsrandom}
%
%
% \subsection{Case 2: individual deaths}
% \label{subsec:inddeathsrandom}
We consider a sequence of i.i.d.~positive random times $\{T_n\}_{n \ge 1}$. According to
Lemma~\ref{lem:finiteintervals}, $\max\{n \colon S_n \le t\}<+\infty$ a.s.~for all $t>0$,
which means that there are a finite number of killing times in each finite interval almost surely.
Morever, suppose that the law of $T_n$ is $\mu_\beta$,
where $\{\mu_\beta\}_{\beta \in (0,+\infty)}$ is a family of
probability measures
on $(0, +\infty)$ (stochastically nondecreasing w.r.to $\beta$) satisfying
%  \begin{enumerate}
% %  \item for all $t >0$,  $\lim_{\beta \to +\infty} \mu_\beta((0,t))=0$,
% %  \item for all $t >0$,  $\lim_{\beta \to 0} \mu_\beta((0,t)) =1$,
% %  \item the second moment of $\mu_\beta$ is finite;
% % \item for all $h>0$ there exists $\beta(h)$ such that $\int_{(0,+\infty)} e^{ht} \mu_{\beta(h)}(\diff t)<+\infty$.
% \item  %there exists $\bar \beta>0$ such that

\begin{equation}\label{eq:assumpbeta}
\begin{split}
(1) & \ \displaystyle{ \forall t_0 >0,\  \lim_{\beta \to +\infty} \mu_\beta((0,t_0])=0;}\\
(2) & \ \forall \beta >0, \ \E_\beta:=\displaystyle{ \int t \, \mu_{\beta}(\diff t) <+\infty;} \\
(3) & \ \forall t_0>0, \ \displaystyle{ \lim_{\beta \to 0}  \int_{(0,t_0]} t \, \mu_\beta(\diff t)  \Big /
\int_{(t_0, \infty)} t \, \mu_\beta(\diff t)  =+\infty. } \\
\end{split}
\end{equation}

%\end{enumerate}
Clearly, since the family  $\{\mu_\beta\}_{\beta \in (0,+\infty)}$ is stochastically nondecreasing, we have that
%$\beta \mapsto \int t \mu_\beta(\diff t)$ and
$\beta \mapsto 
\int_{(t_0, \infty)} t \, \mu_\beta(\diff t)$ is a nondecreasing function for every $t_0 \ge 0$.
Moreover, $(3)$ implies
\[
 (4) \ %& \ \displaystyle{
\forall t_0 >0, \   \lim_{\beta \to 0} \mu_\beta((0,t_0]) =1. %}\\
\]
As an example, consider the family of exponential laws $\mathrm{Exp}(1/\beta)$.

Roughly speaking, in this case, we have two randomizations, first we choose a realization of the random
sequence of times $\{T_n\}_{n \ge 1}$ (we call it, the \textit{environment})
and then we consider the random evolution of the population with
the chosen killing times.
More precisely, the sequence of snapshots of the system taken at the random times $\{S_n\}_{n \ge 0}$
is a multitype branching process in random environment (see \cite{cf:Tanny81} for the definition).
For each fixed $\beta$ we call this the $\beta$-process and each realization $\xi$ of the random time sequence $\{T_i\}_{n \ge 1}$
is our environment.
Henceforth, when we say that some event $\mathcal{A}$ 
(extinction or survival) has probability $0$ (resp.~$>0$)
for almost all realizations of the environment, we mean that the conditional probability of the event with respect to
the realization $\xi$ of the sequence of killing times is $0$ (resp.~$>0$) for almost all realizations $\xi$, that is,
$\pr(\xi \colon \pr(\mathcal{A}| T_i=\xi_i,\, \forall i \ge 1)=0)=1$ (resp.~$\pr(\xi \colon \pr(\mathcal{A}|  T_i=\xi_i,\, \forall i \ge 1)>0)=1$). 

Clearly if $\bar q(\xi)=(\bar q_1(\xi), \bar q_2(\xi))$ is the vector of extinction probabilities (starting from
one susceptible bacterium or from one persistent bacterium respectively), we have that $\pr(q(\xi)= \mathbf{1})$
is either $0$ or $1$. This means that there is a.s.~extinction for almost all realizations of the environment
or for almost no realizations of the environment.

Our results hold for any finite (non-void) initial condition
(and, again, the critical values depend on all the parameters of the system but not on
the initial condition). We assume again
the inequality \eqref{eq:assumpdet} to avoid spontaneous extinction of the bacterial population without the action
of the antibiotic. 

The first theorem states that if $\beta$ exceeds some finite critical value $\beta^1_c$ there is survival almost surely, that is,
for almost every realization of the environment. %(which means, for almost every sequence $\{T_n(\omega)\}$, where $\omega$
%belongs to the probability space).
Roughly speaking, since the expected time is
nondecreasing with respect to $\beta$, it means that if the expected time is too large, the
action of the antibiotic might not be sufficient to kill the whole bacterial population.

\begin{teo}\label{th:survivalRT}
Let $\lambda, a, d_n, d_r \ge 0$, $b >0$  such that equation~\eqref{eq:assumpdet} holds. %such that $(b+d_r)(\lambda-d_n)-ad_r>0$.
Let %$\{T_i\}_{i \ge 1}$ and
$\{\mu_\beta\}_{\beta \in (0,+\infty)}$ %be as before.
satisfy equation~\eqref{eq:assumpbeta} and $a+1-p>0$. If $\beta^1_c(p):=\sup\{\beta \in (0,+\infty)\colon 
\textrm{ the $\beta$-process dies out a.s.}\}$ then
$\beta^1_c(p)<+\infty$ and for all $\beta > \beta^1_c(p)$ we have survival with positive probability
for almost all realizations of the environment. Moreover $p \mapsto \beta_c^1(p)$ is nondecreasing.
%\textcolor{red}{Monotonicity of $\beta^1_c(p)$? $\lim_{p \to 0} \beta^1_c(p)=0$? }
\end{teo}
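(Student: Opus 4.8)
The plan is to show that when $\beta$ is sufficiently large, the $\beta$-process survives with positive probability for almost every realization of the environment, and that the critical threshold $\beta^1_c(p)$ is finite. The starting point is the theory of multitype branching processes in random environment (as in \cite{cf:Tanny81}): survival versus extinction is governed by a suitable Lyapunov exponent built from the random first-moment matrices. For a realization $\xi=(t_1,t_2,\dots)$ of the killing intervals, the $n$-th generational first-moment matrix is $M(t_n)$ from \eqref{eq:1stmomentmatrix}, and the relevant quantity is the top Lyapunov exponent of the random product $M(t_n)\cdots M(t_1)$. By Furstenberg--Kesten theory this equals $\lim_n \frac{1}{n}\log\|M(t_n)\cdots M(t_1)\|$ almost surely, and it is deterministic (constant in $\xi$) since the $T_n$ are i.i.d.; this immediately gives the $0$--$1$ dichotomy that the excerpt already records for $\pr(\bar q(\xi)=\mathbf{1})$. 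Survival with positive probability for a.e.~environment holds precisely when this Lyapunov exponent is strictly positive.

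First I would estimate the Lyapunov exponent from below by testing the random product against the fixed positive direction $Z=(1,a/(b+d_r+x^+))$, the Perron eigenvector of $A$. Because every entry of $M(t)$ is controlled by the fundamental solutions in \eqref{eq:fundsolutions}, and those grow like $e^{tx^+}$ as $t\to\infty$, one gets a lower bound of the form $\langle v, M(t)u\rangle \ge c(p)\,e^{tx^+}$ uniformly over unit positive vectors $u$, valid once $t$ exceeds some threshold $t_0$ (the factor $c(p)>0$ absorbs the $(1-p)$ and $\widetilde r, \bar r$ contributions and uses $a+1-p>0$). Iterating this along the environment, the log-growth rate is bounded below by $x^+\,\E_\beta[T_n]$ minus a bounded correction coming from the intervals shorter than $t_0$; more carefully, $\frac{1}{n}\log\|M(t_n)\cdots M(t_1)\|\ge x^+\cdot\frac{1}{n}\sum_{i\le n} t_i\,\mathbf 1_{t_i>t_0} - C$, and by the law of large numbers the right side converges to $x^+\int_{(t_0,\infty)} t\,\mu_\beta(\diff t) - C$.

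The key mechanism is then assumption~(1) together with the monotonicity of $\beta\mapsto\int_{(t_0,\infty)} t\,\mu_\beta(\diff t)$: as $\beta\to+\infty$ the mass of $\mu_\beta$ escapes to large values, so $\int_{(t_0,\infty)} t\,\mu_\beta(\diff t)\to+\infty$, forcing the Lyapunov exponent to be positive for all large $\beta$. This yields $\beta^1_c(p)<+\infty$, and for $\beta>\beta^1_c(p)$ the exponent is positive, hence survival with positive probability a.s.\ in the environment. Finally, monotonicity of $p\mapsto\beta^1_c(p)$ follows from the stochastic coupling in $p$: a larger $p$ makes the top-left entry $(1-p)\widetilde n(T)$ of $M(T)$ smaller while leaving the persistent-to-persistent dynamics unchanged, so the process with larger $p$ is dominated by the one with smaller $p$ (using the monotonicity in the initial condition and the binomial killing $B(N_t,1-p)$), whence its survival set is smaller and the threshold can only increase.

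The main obstacle I expect is making the lower bound on the Lyapunov exponent fully rigorous and uniform: one must control $\langle v, M(t)u\rangle$ from below for \emph{all} positive unit vectors $u$, not just the Perron direction, since the iterated product visits many directions as the $t_i$ vary. This requires either a Hilbert-projective-metric (Birkhoff cone contraction) argument showing that the positive matrices $M(t)$ contract the positive cone uniformly for $t>t_0$, or an explicit two-sided estimate on the entries of $M(t)$ guaranteeing that the direction cannot degenerate. Establishing this uniform positivity of the contribution of the ``large'' intervals, and checking that the ``small'' intervals contribute only a bounded (in $n$) correction, is where the technical work concentrates; the passage from a positive Lyapunov exponent to survival with positive probability for a.e.\ environment is then standard branching-process-in-random-environment theory.
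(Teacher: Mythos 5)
Your overall strategy coincides with the paper's: work in the multitype branching process in random environment framework of \cite{cf:Tanny81}, show that the top Lyapunov exponent of the random products $M(T_n)\cdots M(T_1)$ is strictly positive for all large $\beta$ (via assumption (1) of \eqref{eq:assumpbeta} plus the law of large numbers), deduce $\beta_c^1(p)<+\infty$ and survival above the threshold from the $0$--$1$ dichotomy, and get monotonicity in $p$ by coupling. The difference is in the key estimate, and it is exactly at the point you flag as your ``main obstacle.'' You try to propagate the growth $e^{t x^+}$ through the product by testing against the Perron direction, which forces you to control the direction of the running product uniformly --- hence your appeal to Birkhoff cone contraction, which you do not carry out. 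The paper sidesteps this entirely with a cruder but self-propagating bound: by continuity and compactness $M(t)\ge \overline M_0:=\mathrm{diag}\bigl((1-p)\varepsilon,\varepsilon\bigr)$ entrywise for \emph{all} $t\ge 0$, and there is $t_p$ with $M(t)\ge \overline M_1:=\mathrm{diag}\bigl(4/((1-p)\varepsilon),4/\varepsilon\bigr)$ for $t\ge t_p$. Entrywise domination of nonnegative matrices is preserved under products, and the minorants are diagonal, hence commute; so once (by assumption (1) and the LLN) at least half of the first $n$ intervals exceed $t_p$ almost surely, one gets $\prod_{i=n}^{1}M_i\ge \overline M_0^{n/2}\overline M_1^{n/2}=2^n\ident$, i.e.\ a Lyapunov exponent $\ge\log 2$, with no direction control whatsoever. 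If you replace your $e^{t_i x^+}$ bookkeeping by a fixed large constant threshold and use diagonal minorants, your argument closes without any cone-contraction machinery; as written, that step is a genuine gap.

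Two further points are missing and are not cosmetic. First, you invoke Furstenberg--Kesten/Tanny without checking their hypotheses: one needs the integrability $\E\bigl[n^{-1}\bigl|\log\|\prod_{i=n}^1 M_i\|\bigr|\bigr]<+\infty$ (the $\log^+$ part via $\|M(t)\|\le Ke^{tx^+}$ and assumption (2), the $\log^-$ part via a lower bound $\|\prod_{i=n}^1 M_i\|\ge \varepsilon^n$ or $(1-p)^n\varepsilon^n$ --- this is where $a+1-p>0$ really enters the paper's proof) as well as the strong regularity and positivity conditions of \cite[Theorem 9.10]{cf:Tanny81}. Second, your unified treatment tacitly assumes all entries of $M(t)$ are positive, which holds only when $a>0$ \emph{and} $p<1$. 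In the admissible degenerate cases ($p=1$ with $a>0$: zero first row; $a=0$ with $p<1$: reducible, triangular matrix) Tanny's positivity conditions fail, and the paper handles them separately as effectively single-type processes with first moments $\bar r(t)$, respectively $\widetilde n(t)$, using \cite[Theorem 3.1]{cf:SW69}. Your lower bound $\langle v, M(t)u\rangle\ge c(p)e^{tx^+}$ can indeed be salvaged columnwise in these cases, but the reduction to the one-dimensional theorem still has to be made explicit; as proposed, the argument does not literally apply there.
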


The second result tells us that if $\beta$ is smaller than some (strictly positive) critical value $\beta^2_c$
then, with probability $1$, the antibiotic will eventually kill the bacterial population. We show that,
%under suitable conditions,
$\beta^2_c$ tends to
$0$ as $p$ tends to $0$.

\begin{teo}\label{th:extinctionRT}
Let $\lambda, a, d_n, d_r \ge 0$, $b >0$  such that equation~\eqref{eq:assumpdet} holds. %such that $(b+d_r)(\lambda-d_n)-ad_r>0$.
Let %$\{T_i\}_{i \ge 1}$ and
$\{\mu_\beta\}_{\beta \in (0,+\infty)}$ %be as before.
satisfy equation~\eqref{eq:assumpbeta}.
If $\beta^2_c(p):=\inf\{\beta \in (0,+\infty)\colon \textrm{ the $\beta$-process survives with positive probability}\}$ then
$\beta^2_c(p)>0$ such that for all $\beta < \beta^2_c(p)$ we have a.s.~extinction
for almost all realizations of the environment.
Moreover, %if $\lambda > a+d_n$ then
 $p \mapsto \beta_c^2(p)$ is nondecreasing and $\inf_{p \to 0} \beta^2_c(p)=0$.
\end{teo}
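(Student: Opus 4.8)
The plan is to reduce everything to the top Lyapunov exponent of the i.i.d.\ sequence of mean matrices and then read off the three assertions from monotonicity and two elementary matrix estimates. Writing $M(T)=D_p\,e^{AT}$ with $D_p:=\mathrm{diag}(1-p,1)$ as in \eqref{eq:1stmomentmatrix}, the snapshot process is a two-type branching process in the i.i.d.\ environment $\{T_n\}_{n\ge1}$, so by \cite{cf:Tanny81} its fate is governed by
\[
\gamma(\beta,p):=\lim_{n\to\infty}\tfrac1n\log\big\|M(T_1)\cdots M(T_n)\big\|_1,
\]
a limit that exists and is a.s.\ constant by the Furstenberg--Kesten theorem, the moment condition $\int\log^+\|M(t)\|_1\,\mu_\beta(\diff t)<\infty$ following from $\|M(t)\|_1\le Ce^{x^+t}$ (read off \eqref{eq:fundsolutions}) together with \eqref{eq:assumpbeta}(2). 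For almost every environment the population survives with positive probability iff $\gamma(\beta,p)>0$ and dies out a.s.\ iff $\gamma(\beta,p)\le0$; recall also the $0$--$1$ law for survival noted above. Since for $\beta<\beta^2_c(p)$ no such $\beta$ lies in the survival set, the implication ``$\beta<\beta^2_c(p)\Rightarrow$ a.s.\ extinction'' is immediate once $\beta^2_c(p)>0$ is known, so the real content is the three claims $\beta^2_c(p)>0$ (for $p>0$), monotonicity in $p$, and $\lim_{p\to0}\beta^2_c(p)=0$.

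For $\beta^2_c(p)>0$ I would bound $\gamma$ from above by a single step: submultiplicativity of $\|\cdot\|_1$ and independence make $a_n:=\E\big[\log\|M(T_1)\cdots M(T_n)\|_1\big]$ subadditive, whence $\gamma(\beta,p)=\inf_n a_n/n\le a_1=\int\log\|M(t)\|_1\,\mu_\beta(\diff t)$, and it suffices to make this negative for small $\beta$. Using \eqref{eq:fundsolutions} one checks that at $t=0$ the two columns of $M(t)$ have $\ell^1$-sums $1-p$ and $1$, the second decreasing at rate $pb+d_r>0$; hence there are $c>0$ and $t_0>0$ (depending on $p$ and the parameters) with $\log\|M(t)\|_1\le -c\,t$ on $(0,t_0]$, while $\|M(t)\|_1\le Ce^{x^+t}$ gives $\log\|M(t)\|_1\le C't$ on $(t_0,\infty)$. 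Splitting the integral at $t_0$ yields
\[
\int\log\|M(t)\|_1\,\mu_\beta(\diff t)\ \le\ -c\!\int_{(0,t_0]}\!t\,\mu_\beta(\diff t)\ +\ C'\!\int_{(t_0,\infty)}\!t\,\mu_\beta(\diff t),
\]
which is negative for all small $\beta$ by \eqref{eq:assumpbeta}(3); thus $\gamma(\beta,p)<0$ there and $\beta^2_c(p)>0$. I expect this to be the main obstacle: one must pin down which column realizes $\|M(t)\|_1$ near $t=0$ (this is exactly where $p>0$, hence $pb+d_r>0$, enters) and make sure the chosen $t_0$ is the fixed positive number to which \eqref{eq:assumpbeta}(3) is applied; the degenerate reducible case $a=0$ also needs to be accommodated.

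Monotonicity in $p$ is then formal. The factor $D_p=\mathrm{diag}(1-p,1)$ is entrywise nonincreasing in $p$ and $e^{AT}\ge0$, so each $M(T)$, hence every product $M(T_1)\cdots M(T_n)$, is entrywise nonincreasing in $p$; as $\|\cdot\|_1$ is monotone on nonnegative matrices, $\gamma(\beta,p)$ is nonincreasing in $p$. Consequently the survival set $\{\beta:\gamma(\beta,p)>0\}$ shrinks as $p$ increases, and its infimum $\beta^2_c(p)$ is nondecreasing.

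Finally, for $\lim_{p\to0}\beta^2_c(p)=0$ the key is the entrywise lower bound $M(T)=D_p\,e^{AT}\ge(1-p)\,e^{AT}$. Because the matrices $e^{AT_i}$ commute, $\prod_{i=1}^n M(T_i)\ge(1-p)^n e^{AS_n}$ entrywise, so $\big\|\prod_{i=1}^n M(T_i)\big\|_1\ge(1-p)^n\|e^{AS_n}\|_1\ge(1-p)^n e^{x^+S_n}$, using $\|\cdot\|_1\ge\rho$ and $\rho(e^{AS_n})=e^{x^+S_n}$. Dividing by $n$ and letting $n\to\infty$, the strong law $S_n/n\to\E_\beta<\infty$ (by \eqref{eq:assumpbeta}(2)) gives, for every fixed $\beta>0$,
\[
\gamma(\beta,p)\ \ge\ \log(1-p)+x^+\,\E_\beta .
\]
Since $x^+>0$ and $\E_\beta>0$, the right-hand side is positive as soon as $p<1-e^{-x^+\E_\beta}$; thus for each fixed $\beta_0>0$ the $\beta_0$-process survives for all small $p$, i.e.\ $\beta^2_c(p)\le\beta_0$. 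Hence $\limsup_{p\to0}\beta^2_c(p)\le\beta_0$ for every $\beta_0>0$, and together with $\beta^2_c(p)\ge0$ this gives $\lim_{p\to0}\beta^2_c(p)=0$, as claimed.
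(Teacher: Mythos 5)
Your proposal is correct and follows essentially the same route as the paper: bound the Lyapunov exponent above by $\E[\log\|M(T_1)\|]$ and make that negative for small $\beta$ using the local decrease of $\|M(t)\|$ at rate $pb+d_r$ near $t=0$ together with condition (3) of \eqref{eq:assumpbeta}, then bound it below by $\log(1-p)+x^+\E_\beta$ via $M(T)\ge(1-p)e^{AT}$, commutativity and the strong law, which gives $\beta_c^2(p)\le\beta$ for every fixed $\beta$ once $p$ is small. The only minor divergence is the monotonicity of $p\mapsto\beta_c^2(p)$: you deduce it from entrywise monotonicity of $D_p e^{AT}$ (hence of the Lyapunov exponent) while the paper invokes a coupling making the survival probability nonincreasing in $p$ for each fixed environment; both work, and yours stays entirely at the level of the first-moment matrices.
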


Sharper results can be obtained if we assume that the random times have a exponential distribution with
expected value $1/\beta$. In this case there is a unique critical threshold $\beta_c$ separating
almost sure extinction from survival with positive probability. 
%Note that at the critical value the bacterial population becomes extinct.

\begin{teo}\label{th:tcriticalrandom}
 Let $\lambda, d_n, d_r \ge 0$, $a,b >0$  such that equation~\eqref{eq:assumpdet} holds and $a+1-p>0$.. %such that $(b+d_r)(\lambda-d_n)-ad_r>0$.
Let $\{\mu_\beta\}_{\beta \in (0,+\infty)}$ be a sequence of exponential laws %such that
$\mu_\beta \sim \mathrm{Exp}(1/\beta)$.
There exists $\beta_c(p) \in (0,+\infty)$ such that for all $\beta > \beta_c(p)$ we have survival with positive probability
for almost all realizations of the environment and for all $\beta %\le 
< \beta_c(p)$ we have a.s.~extinction
for almost all realizations of the environment.
Moreover %if $\lambda>a+d_r$ then
$p\mapsto \beta_c(p)$ is nondecreasing and
$\lim_{p \to 0} \beta_c(p)=0$.
\end{teo}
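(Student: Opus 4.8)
The plan is to reduce Theorem~\ref{th:tcriticalrandom} to the two general theorems already established in this section, exploiting the special structure of the geometric family $\mu_\beta \sim \mathcal{G}(1/\beta)$. For a general stochastically increasing family we only obtained two separate thresholds $\beta_c^2(p) \le \beta_c^1(p)$, leaving open the possibility of an intermediate regime where the behaviour is undetermined. The whole content of the sharper statement is therefore that, \emph{for the geometric family}, these two thresholds coincide, so that $\beta_c(p) := \beta_c^1(p) = \beta_c^2(p)$ is a genuine phase transition. Once this equality is in hand, the monotonicity of $p \mapsto \beta_c(p)$ follows from Theorem~\ref{th:survivalRT}, the finiteness $\beta_c(p)<+\infty$ from Theorem~\ref{th:survivalRT}, the strict positivity $\beta_c(p)>0$ from Theorem~\ref{th:extinctionRT}, and the limit $\lim_{p\to 0}\beta_c(p)=0$ from the last assertion of Theorem~\ref{th:extinctionRT} together with $\beta_c \le \beta_c^1$. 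The survival/extinction dichotomy for $\beta > \beta_c$ and $\beta < \beta_c$ is then immediate from the definitions of $\beta_c^1$ and $\beta_c^2$.

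First I would recall the criterion that governs a multitype branching process in random environment: by Tanny's theory (see \cite{cf:Tanny81}), in the irreducible supercritical-environment setting, almost sure extinction for almost every realization of the environment is equivalent to $\Etwo_\beta[\log \gamma^+(T_1)] \le 0$, where $\gamma^+(t)$ is the Perron--Frobenius eigenvalue of the one-step first-moment matrix $M(t)$ from \eqref{eq:1stmomentmatrix}, and survival with positive probability a.s.\ is equivalent to $\Etwo_\beta[\log \gamma^+(T_1)] > 0$. The point of assuming $a,b>0$ (strict, unlike in the deterministic theorem) is precisely to guarantee that $M(t)$ is strictly positive for every $t>0$, hence the process is irreducible and this clean single-inequality criterion applies with no intermediate indeterminate zone. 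I would then define
\begin{equation}\label{eq:Lbeta}
L(\beta) := \int_{(0,+\infty)} \log \gamma^+(t)\, \mu_\beta(\diff t),
\end{equation}
and the whole theorem amounts to showing that $L$ changes sign exactly once as $\beta$ ranges over $(0,+\infty)$, with the sign pattern $L(\beta)<0$ for small $\beta$ and $L(\beta)>0$ for large $\beta$.

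The key analytic steps are therefore threefold. First, from the explicit fundamental solutions in \eqref{eq:fundsolutions} one reads off that $\gamma^+(t)$ grows like $e^{t x^+}$ as $t\to+\infty$ (with $x^+>0$ by \eqref{eq:assumpdet}), so $\log\gamma^+(t)$ is asymptotically linear with positive slope; in particular $\log\gamma^+(t)\to+\infty$. Second, one checks the behaviour near $t=0$: since $M(0)=\mathrm{diag}(1-p,1)$ has Perron eigenvalue $1$ (using $a+1-p>0$ to see which entry dominates), $\log\gamma^+(t)$ is negative for small $t$ whenever $p>0$, which is exactly the mechanism by which a short expected interval kills the population. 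Third, and this is where assumption~\eqref{eq:assumpbeta} does the real work, I would show $\beta\mapsto L(\beta)$ is continuous and strictly increasing: stochastic monotonicity of $\mu_\beta$ together with monotonicity of $t\mapsto\gamma^+(t)$ gives that $L$ is nondecreasing, condition~(1) (mass escaping to large $t$) forces $L(\beta)\to+\infty$, while condition~(3)/(4) (mass concentrating near $0$) forces $\liminf_{\beta\to 0} L(\beta)\le 0$ because the integral is dominated by the region where $\log\gamma^+<0$.

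The main obstacle will be the final sign control and \emph{strict} monotonicity needed to pin $\beta_c^1=\beta_c^2$ to a single point rather than an interval. Establishing $L(\beta)<0$ for all sufficiently small $\beta$ is the delicate estimate: one must show that the positive contribution from the tail $(t_0,\infty)$, where $\log\gamma^+$ can be large, is outweighed by the negative contribution near $0$; here condition~(3) is indispensable, as it quantifies precisely that the tail's first moment is negligible relative to the bulk near the origin, and one combines this with the linear upper bound $\log\gamma^+(t)\le C(1+t)$ to control $\int_{(t_0,\infty)}\log\gamma^+\,\diff\mu_\beta$ by $\int_{(t_0,\infty)} t\,\diff\mu_\beta$. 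For strict monotonicity I would argue that on any nondegenerate interval of $\beta$ the measures $\mu_\beta$ genuinely shift mass across a region where $\log\gamma^+$ is strictly increasing, so $L$ cannot be locally constant; for the specific geometric family this can also be verified by a direct computation of $\frac{\diff}{\diff\beta}L(\beta)$, which I expect to be the cleanest route and to simultaneously deliver $\beta_c^1(p)=\beta_c^2(p)=\beta_c(p)$ as the unique zero of $L$.
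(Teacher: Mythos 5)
Your overall frame (define $\beta_c$ as a single threshold, then import positivity from Theorem~\ref{th:extinctionRT}, finiteness from Theorem~\ref{th:survivalRT}, and the limit $\lim_{p\to0}\beta_c(p)=0$ from Theorem~\ref{th:extinctionRT}) matches the paper, but the mechanism you propose for the crucial step --- collapsing $\beta_c^1$ and $\beta_c^2$ to one point --- has two genuine flaws. First, your extinction/survival criterion is not the correct one: for a \emph{multitype} branching process in random environment, Tanny's theorems (see Remark~\ref{rem:kingman}) are stated in terms of the Lyapunov exponent $\delta_\beta=\lim_n n^{-1}\log\|M_n\cdots M_1\|$ of the random \emph{matrix product}, not in terms of $\E[\log\gamma^+(T_1)]$, the expected log of the Perron--Frobenius eigenvalue of a single factor. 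Since $M(t)=\mathrm{diag}(1-p,1)\,e^{At}$ for different values of $t$ do not commute (unless $p=0$), these two quantities differ in general; the scalar criterion $\E[\log(\cdot)]$ of Smith--Wilkinson applies only in the single-type reductions $p=1$ or $a=0$ (which is exactly how the paper uses it in Example~\ref{exmp:ex1}). Second, your monotonicity argument rests on ``monotonicity of $t\mapsto\gamma^+(t)$,'' which is false: from the proof of Theorem~\ref{th:tcritical}, $\gamma^+_0=1$, $\gamma^+_t<1$ for $t\in(0,T_c)$ and $\gamma^+_t>1$ for $t>T_c$, so $\gamma^+$ dips below $1$ before growing. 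This is not a repairable detail: Example~\ref{exmp:ex1} exhibits a stochastically increasing family $\{\mu_\beta\}$ satisfying \eqref{eq:assumpbeta} for which survival is genuinely non-monotone in $\beta$ and $\beta_c^2<\beta_c^1$. Hence no argument that uses only stochastic monotonicity plus generic analytic properties of $\gamma^+$ (or of $\delta_\beta$) can prove the theorem; one must use a property special to the geometric family, and your closing remark about ``directly computing $\frac{\diff}{\diff\beta}L(\beta)$'' does not identify what that property is (Lyapunov exponents of non-commuting random matrix products admit no such closed-form computation anyway).

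What the geometric assumption actually buys --- and what the paper's proof uses --- is memorylessness, i.e.\ a thinning (decimation) coupling: for $\beta_1\ge\beta_2$ one can realize the killing times of the $\beta_1$-process, pathwise, as a \emph{subset} of the killing times of the $\beta_2$-process, and similarly couple the Bernoulli killing marks for $p_1\le p_2$. The paper builds a graphical construction (a colored binary tree with Poisson clocks for breeding, switching and dying) on which both processes are realized simultaneously; since each mass-killing event only removes particles, the surviving population of the $(\beta_1,p_1)$-model pathwise dominates that of the $(\beta_2,p_2)$-model. This yields that the (conditional) survival probability is nondecreasing in $\beta$ and nonincreasing in $p$, which is exactly the sharp monotonicity that forces a unique threshold $\beta_c(p)=\beta_c^1(p)=\beta_c^2(p)$; the remaining claims then follow from Theorems~\ref{th:survivalRT} and~\ref{th:extinctionRT} as in your outline. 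This coupling idea is the missing core of your proposal.
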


The next example shows, that for a generic $\{\mu_\beta\}_{\beta \in (0,+\infty)}$ satisfying our
hypotheses, we cannot always expect $\beta_c^2(p)=\beta_c^1(p)$. This means that the probability of survival
does not need to be monotone with respect to $\beta$.

\begin{exmp}\label{exmp:ex1}
 Let us take $\lambda=(\sqrt{21}+3)/4$, $b=(\sqrt{21}-3)/4$, $a=d_n=d_r=1/2$ and $p=1$. Since $p=1$ it is enough
to consider the expected size of the persistent population (at each killing time, susceptible bacteria
are killed). We note that $x^+=1$, $x^-=-1$ and $\bar r(t)=e^t(5-\sqrt{21})/8 + e^{-t}(3+\sqrt{21})/8$.
From equation~\eqref{eq:1stmoment2}, $\gamma^+_t=\bar r(t)$;
moreover the only strictly positive solution of $\bar r(t)=1$ is $T_c=\log(3+\sqrt{21})-\log(5-\sqrt{21})$.
We have $\bar r(t)<1$ for all $t\in (0,T_c)$ and $\bar r(t)>1$ for all $t >T_c$.
% the minimum of $bar r(t)$ is at $t_0=0.5 \log(3+\sqrt{21})-\log(5-\sqrt{21})$ and $bar r(t_0)=
% \frac18 \left ( \sqrt{\frac{3+\sqrt{21}}{5-\sqrt{21}}}+ \frac{5-\sqrt{21}}{3+\sqrt{21}}}\right )$.
Consider the following family of measures
\[
 \mu_\beta:=
\begin{cases}
\frac12 \overline{\delta}_{\beta/10} + \frac12 \overline{\delta}_{3\max(\beta,1)} & \beta \in (0,15] \\
 \frac12 \overline{\delta}_{\beta-13.5} + \frac12 \overline{\delta}_{\beta-12} & \beta \in (15,+\infty) \\
\end{cases}
\]
where $\overline{\delta}_\alpha$ is the Dirac measure at $\alpha \in \mathbb{R}$.
Roughly speaking, for any fixed $\beta$, every time interval $T_i$ is chosen independently between two values
with probability $1/2$ each. It is straightforward to see that the family $\{\mu_\beta\}_{\beta \in (0,+\infty)}$
is stochastically increasing and satisfies equation~\eqref{eq:assumpbeta}.
According to \cite[Theorem 3.1]{cf:SW69} (see also Remark~\ref{rem:kingman} for details) if 
$\E[\log(\bar r(T_1))] \le 1$ there is a.s.~extinction for almost every realization of the environment,
while if  
$\E[\log(\bar r(T_1))] > 1$ there is positive probability of survival for almost every realization of the environment.
Here we have extinction if $\beta$ is close to $0$ (take for instance, $\beta=0.5$), we have survival
if $\beta=1$, we have extinction again if $\beta=15$ and we have survival if $\beta$ is large (take for instance,
$\beta=16.5$). Thus the probability of survival is not monotone and $\beta_c^2(p) \le 1<15 \le \beta_c^1(p)$.
\end{exmp}

\section{Proofs}\label{sec:proofs}

\begin{proof}[Proof of Theorem~\ref{th:tcritical}]
Define
\begin{equation}\label{eq:1stmoment2}
\begin{split}
 F_{t,p}(x)&:= x^2-x((1-p) \widetilde  n(t)+\bar r(t))+(1-p) (\widetilde n(t)\bar r(t)-\bar n(t) \widetilde r(t))\\
% &=x^2 -x
% \left ( \frac{e^{tx^+}}{x^+- x^-}  \left ((1 -p)i^+ - i^- \right )
% + \frac{e^{tx^-}}{x^+- x^-}  \left (i^+ - (1 -p)i^-\right )
% \right)
% +(1-p)e^{t(x^++x^-)}\\
&=x^2 -x
\left (e^{tx^+}\left (1 -p \frac{b+d_r+x^+}{x^+- x^-} \right )
+ e^{tx^-}\left (1 + p \frac{b+d_r+x^-}{x^+- x^-} \right )
\right)
+(1-p)e^{t(x^++x^-)}\\
% &=x^2 -x \left (e^{tx^+}\left (1 -p \frac{b+d_r+x^+}{x^+- x^-} \right )
% + e^{tx^-}\left (1 + p \frac{b+d_r+x^-}{x^+- x^-} \right )
% \right)
% +(1-p)e^{t(\lambda-b-d_r-a-d_n)}\\
\end{split}
\end{equation}
where %$i^+:=b+d_r+x^+$, $i^-:=b+d_r+x^-$ and
$x^++ x^-=\lambda-b-d_r-a-d_n$.
% -------- Ricontrollare---------------
% then $\gamma^+$ is the only positive solution of $F_T(x)=0$;
% hence there is survival if and only if $F_T(1)<0$.
% % Remember that $x^++x^-=\lambda-b-d_r-a-d_n$.
% We define $g(t):=F_T(1) \equiv
% 1 - \frac{e^{tx^+}-e^{tx^-}}{x^+- x^-} \left (a+(1-p)b \right )
% -(1-p)e^{t(x^++x^-)}$.
%
% \subsection{Case 1: no individual deaths}
% \label{subsec:noinddeathsdeterm}
%
%
% \subsection{Case 2: individual deaths}
% \label{subsec:inddeathsdeterm}
Let $\gamma^+_t \ge \gamma^-_t$ be the solutions of $F_{t,p}(x)=0$.
In order to check the inequality $\gamma^+_t>1$ we study the differentiable function $t \to F_{t,p}(1)$
for every fixed $p \in (0,1]$;
in particular we look for the solutions of the equation $F_{t,p}(1)=0$ with respect to $t$.
Clearly $\gamma^+_0=1$ and the other solution of $F_{0,p}(x)=0$ is $\gamma^-_0=1-p<1$.
Using equations~\eqref{eq:1stmoment2} and \eqref{eq:systdiff} we have
$\frac{\diff}{\diff t} F_{t,p}(1)|_{t=0}= %-(1-p)(\lambda-a-d_n)+b+d_r+(1-p)(\lambda-a-d_n-b-d_r)=
p(d_r+b)>0$.
Hence there exists $\varepsilon >0$ such that $F_{t,p}(1)>0$ for  all $t\in (0,\varepsilon)$; thus,
%since the solutions are continuous functions and $\gamma^-_0<1$,
by continuity and since $\gamma^-_0<1$,
we have that
$\gamma_t^+<1$ for all $t\in (0,\varepsilon)$.
Since $\lim_{t \to \infty} F_{t,p}(1)=-\infty$ for all $p \in (0,1]$, there is at least one strictly positive
solution to $F_{t,p}(1)=0$ w.r.~to $t$. In order to show that it is unique,
observe that, since $x^-<0$,
\[
 \frac{\diff}{\diff t} F_{t,p}(1)= e^{t(x^++x^-)}
 \left [ (1-p)(x^++x^-)
 -\left ( x^+ e^{t|x^-|}\left (1 -p h \right )
- |x^-| e^{-tx^+}\left (1 + p (1-h) \right )
\right)
\right ]
\]
where $h={(b+d_r+x^+)}/{(x^+- x^-)}$. 
Clearly $\sgn(\frac{\diff}{\diff t} F_{t,p}(1))=\sgn(L(t,p))$ where $L(t,p):=F_{t,p}(1)e^{-t(x^++x^-)}$
%
%Thus
%if we define $=(1-p)(x^++x^-)- x^+ e^{-tx^-}\left (1 -p h \right )
%+ |x^-| e^{-tx^+}\left (1 + p (1-h) \right )$
%we have that .
Since $x^+>0$ we have that (for every fixed $p$) $t \mapsto L(t,p)$ is a strictly decreasing %, continuous
function such
that $L(0,p)>0$ and $\lim_{t \to +\infty} L(t,p)=-\infty$; thus there
exists a unique $T_c=T_c(p) \in (0, +\infty)$ such that
$F_{t,p}(1)>0$ (resp.~$F_{t,p}(1)<0$) if $t \in (0, T_c)$
(resp.~$t \in (T_c, \infty)$). This implies that
 $\gamma^+_t<1$ for all $t \in (0,T_c)$ and
$\gamma^+_t \ge 1$ for all $t \in [T_c, +\infty)$ (clearly $F_{T_c(p),p}(1)=0$ and $\gamma^+_{T_c(p)}=1$).

By elementary analysis $p \mapsto T_p$ is a differentiable
function (for every fixed $t \ge 0$).
Moreover, by convexity, since $(b+d_r+x^+)/(x^+- x^-) \in [0,1]$,
\[
\begin{split}
 \frac{\diff}{\diff p} F_{t,p}(1)&=e^{tx^+} \frac{b+d_r+x^+}{x^+- x^-}
+ e^{tx^-} \left (- \frac{b+d_r+x^-}{x^+- x^-} \right )
-e^{t(x^++x^-)}\\
& \ge e^{t \left ( x^+(b+d_r+x^+)/(x^+- x^-)-x^-(b+d_r+x^-)/(x^+- x^-) \right )}-e^{t(x^++x^-)} \\
&=
e^{t(x^++x^-+b+d_r)}-e^{t(x^++x^-)}>0
\end{split}
\]
for all $t>0$. Hence $p \mapsto F_{t,p}(1)$ is strictly increasing which implies that $p \mapsto T_c(p)$ is strictly increasing.
Since $\lim_{p \to 0} F_{t,p}(1)=F_{t,0}(1)<0$ for all $t>0$ (indeed the process is supercritical in the absence of mass-killing),
we have that $\lim_{p \to 0}T_c(p)=0$.
% To justify all we said so far we can use the basci theory of implicit functions.

Finally, if $a>0$ then there is survival starting from $1$ persistent particle if and only if there is survival starting
from $1$ susceptible particle; thus, since the process is monotone with respect to the initial state, the
long-term behavior is the same as long as the initial state is finite. If $a=0$ then $p<1$ and the Perron-Frobenius eigenvalue
$x^+=m_{11}(t)$, hence our result holds starting from $1$ susceptible particle; nevertheless, since $b>0$,
even if we start from $1$ persistent particle there is a positive probability it becomes a susceptible one, hence
there is a positive probability of survival starting from $1$ susceptible particle if and only if there is a positive
probability of survival starting from any finite initial state.
\end{proof}

The proof of the following Lemma is very easy, nevertheless we include it for
completeness.

\begin{lem}\label{lem:finiteintervals}
 Let $\{T_i\}_{i \in \N}$ be  nonnegative i.i.d.~random variables. If $\pr(T_1>0)>0$
 then $\E[N_t]<+\infty$ where $N_t:=\max\{n\colon \sum_{i=0}^n T_i \le t\}$.
\end{lem}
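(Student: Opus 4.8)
The plan is to exploit the hypothesis $\pr(T_1>0)>0$ by first extracting a \emph{strictly positive} threshold below which the increments may be ignored. The delicate point of the statement is precisely that the $T_i$ may vanish with positive probability, so one cannot bound $N_t$ by naively counting increments (if, say, $T_i\equiv 0$, then $N_t=+\infty$): one must separate out the genuinely positive jumps. Everything hinges on the observation that, since $\{T_1>0\}=\bigcup_{k\ge 1}\{T_1>1/k\}$, continuity from below of the probability measure yields some $\delta>0$ with $q:=\pr(T_1>\delta)>0$. I would fix this $\delta$ throughout.

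Next I would set $K:=\lfloor t/\delta\rfloor$ and run a pigeonhole argument. On the event $\{N_t\ge n\}$ one has $\sum_{i=0}^{n}T_i\le t$; since every index $i$ with $T_i>\delta$ contributes strictly more than $\delta$ to this sum, at most $K$ of the variables $T_0,\dots,T_n$ can exceed $\delta$. Writing $B_{n+1}:=\sum_{i=0}^{n}\I_{\{T_i>\delta\}}$, which is a $\mathrm{Binomial}(n+1,q)$ random variable by independence, this shows
\[
\pr(N_t\ge n)\ \le\ \pr\!\left(B_{n+1}\le K\right).
\]

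It then remains to bound this lower binomial tail and sum over $n$. For $m=n+1$ I would use the elementary estimate
\[
\pr\!\left(B_m\le K\right)=\sum_{j=0}^{K}\binom{m}{j}q^{j}(1-q)^{m-j}\ \le\ (1-q)^{m-K}\sum_{j=0}^{K}\binom{m}{j}q^{j},
\]
in which the right-hand sum is a polynomial in $m$ of degree $K$ (with $K$ fixed, depending only on $t$ and $\delta$), while $(1-q)^{m}$ decays geometrically because $q>0$ forces $1-q<1$. Hence $\pr(N_t\ge n)$ is summable in $n$, and the tail-sum bound gives
\[
\E[N_t]\ \le\ \sum_{n\ge 0}\pr(N_t\ge n)\ \le\ 1+\sum_{m\ge 1}(1-q)^{m-K}\sum_{j=0}^{K}\binom{m}{j}q^{j}\ <\ +\infty,
\]
which is the claim. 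The only genuine obstacle is the first step, namely producing the positive threshold $\delta$ that neutralizes the possibility $\pr(T_1=0)>0$; once that is in hand the argument reduces to a crude geometric-decay estimate for the lower tail of a binomial, and no large-deviation machinery beyond the bound above is required.
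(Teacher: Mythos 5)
Your proof is correct, and it takes a genuinely different route from the paper's. Both arguments begin with the same reduction, namely $\E[N_t]\le\sum_{n\ge 0}\pr(N_t\ge n)$ with $\pr(N_t\ge n)=\pr(S_n\le t)$, and both must neutralize a possible atom of $T_1$ at $0$ by a truncation; the difference is in how $\pr(S_n\le t)$ is then bounded. The paper truncates \emph{from above}, setting $\overline T_i:=\min(T_i,1)$ so that fourth moments are finite while $\mu:=\E[\overline T_1]>0$ (this is where $\pr(T_1>0)>0$ enters there), and runs a law-of-large-numbers-type estimate: $\pr(S_n\le t)\le\pr(|S_n/n-\mu|\ge\mu/2)\le \E[|S_n/n-\mu|^4]/(\mu/2)^4\le C/n^2$ eventually in $n$, after expanding the fourth moment of the centered sum and using independence to annihilate the cross terms. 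You instead truncate \emph{from below}, extracting $\delta>0$ with $q:=\pr(T_1>\delta)>0$, and convert $\{S_n\le t\}$ by pigeonhole into the binomial lower-tail event $\{B_{n+1}\le\lfloor t/\delta\rfloor\}$, bounded by a geometric factor times a polynomial. Your route is more elementary (no moment computations at all) and yields a strictly stronger conclusion: the tails of $N_t$ decay geometrically, so $N_t$ in fact has finite exponential moments, whereas the paper's $O(n^{-2})$ bound gives only summability, hence finiteness of the mean. The paper's argument, in exchange, is the off-the-shelf Chebyshev--Markov scheme and transfers verbatim to settings where one controls moments rather than a positive threshold. Two cosmetic points in your write-up: the finitely many terms with $m\le K$ in your final sum should simply be bounded by $1$ (the factor $(1-q)^{m-K}$ exceeds $1$ there, so nothing breaks, but it reads oddly), and the degenerate case $q=1$, where $N_t\le K$ almost surely, is best dispatched separately rather than through the displayed estimate.
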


\begin{proof}
 Let $S_n:=\sum_{i=0}^n T_i$ and suppose that $\E[T_i^4]<+\infty$: in this case define
 $\E[T_i]=:\mu >0$, $\E[(T_i-\mu)^2]=:\sigma^2$ and $\E[(T_i-\mu)^4]=:r^4$.
 Clearly, eventually as $n \to \infty$,
 \[
  \begin{split}
\pr(N_t \ge n)&=\pr(S_n \le t)= \pr(S_n/n-\mu \le t/n-\mu)\\
&\le \pr(|S_n/n-\mu| \ge \mu/2) \le \E[|S_n/n-\mu|^4]/(\mu/2)^4=
\frac{16}{n^4 \mu^4} \E \left [(\sum_{i=1}^n (T_i-\mu))^4 \right
]=(*).
  \end{split}
 \]
Now $(\sum_{i=1}^n (T_i-\mu))^4=\sum_{\textbf{i} \in \{1, \ldots, n\}^4} \prod_{j=1}^4 (T_{\textbf{i}_j}-\mu)$;
moreover the independence of $\{T_i\}_{i \in \N}$ yields
%\[
 $
\E[\prod_{j=1}^4 (T_{\textbf{i}_j}-\mu)]=0
$
%\]
if there exists $j$ such that $\textbf{i}_j \not = \textbf{i}_k$ for all $k \not = j$.
Hence
 \[
  \begin{split}
 (*)
 %&
 =   \frac{16}{n^4 \mu^4} \E \left [\sum_{j=1}^n (T_{\textbf{i}_j}-\mu)^4 + \sum_{h,j \colon h \not = j} (T_{\textbf{i}_h}-\mu)^2
 (T_{\textbf{i}_j}-\mu)^2 \right ] 
 %\\
%& 
= \frac{16}{n^4 \mu^4} [n r^4 +n(n-1) \sigma^4] %=: \alpha_n 
\le C/n^2
  \end{split}
 \]
thus %$\sum_{n \in \N} \alpha_n < +\infty$. Hence 
$\E[N_t]= \sum_{n
\in \N} \pr(N_t \ge n) <+\infty$.

In the general case, define $\overline T_i := \min(T_i, 1)$. Then $\E[\overline T_i^4]<+\infty$ hence
% $\E[N_t]= \sum_{n \in \N} \pr(N_t \ge n) = \sum_{n \in \N} \pr(S_n \le t) \le \sum_{n \in \N} \pr(\overline S_n \le t) <+\infty$
% where $\overline S_n:=\sum_{i=0}^n \overline T_i$.
$\E[N_t] \le \E[\bar N_t]<+\infty$ where $\bar N_t:=\max\{n\colon \sum_{i=0}^n \bar T_i \le t\}$.
\end{proof}

\begin{rem}\label{rem:kingman}
In the random killing time case we deal, in general, with a multitype branching process in random environment
where the sequence of environments is i.i.d. hence, if we denote
by $M_n:=M(T_n)$ the first-moment matrix \eqref{eq:1stmomentmatrix} with $T=T_n$,
by Kingman Subadditive Theorem, we have (see for instance
\cite{cf:AthKarlin71, cf:FurstKest60, cf:Kingman73, cf:Steele89, cf:Tanny81})
\[
 \lim_{n \to \infty} n^{-1} \log \left ( \left \| \prod_{i=n}^1 M_i \right \| \right )=\delta_\beta \equiv \E[\delta_\beta],
\quad \textrm{a.s.}
\]
and $\delta_\beta= \lim_{n \to \infty} n^{-1} \E \left [\log \left ( \left \| \prod_{i=n}^1 M_i \right \| \right ) \right ]$
where $\|M\|:= \max_j \sum_i |M_{ij}|$ and $\prod_{i=n}^1 M_i:=M_n M_{n-1} \cdots M_1$.
%
% Here the space is the probability space of all the sequences of independent 1-st moment marices (with the product
% measure $\prod_n \mu_\beta$. The preserving measure transormation is the left shift. And each
% invariant r.v. is measurable with respect to the tail $\sigma$-algebra, hence it is constant.
%
%
This plays the role of the Perron-Frobenius eigenvalue of the deterministic case
and it will be useful in the next proofs (where we use \cite[Teorems 9.6 and 9.10]{cf:Tanny81}; in the case $p=1$ one
may use also \cite[Theorem 3.1]{cf:SW69} instead).

Moreover, it is easy to check that the conditions of \cite[Teorems 9.10]{cf:Tanny81} are satisfied.
%Note that the two additional conditions of \cite[Theorem 9.10]{cf:Tanny81} can be easily checked.
Indeed, the entries of the first-moment matrix satisfy $m_{i,j}(t)>0$ for all $t>0$ and for all $i,j=1,2$. Moreover,
since $\mu_\beta((0,+\infty))=1$ for all $\beta \in (0,+\infty)$, we have
$\pr(\min_{i,j} (M_1)_{i,j}>0)=1$. Finally, if we start with a susceptible particle then
$\pr(N(t)\ge 1) \ge e^{-(a+d_n)t}$ hence
\[
 \E[|\log (1-\pr(N(t)=0))|] \le \int_{(0,+\infty)} (a+d_n)t \mu_\beta(\diff t)
<+\infty, \quad \forall \beta \in (0,+\infty).
\]
On the other hand, if the initial condition is a persistent particle we proceed by using $R(t)$ instead of $N(t)$.
One can check analogously that our branching process in random environment is strongly regular
(see \cite[Definition 9.1]{cf:Tanny81}).
Hence, according to \cite[Theorem 9.10]{cf:Tanny81}, we have:
\begin{enumerate}
\item $\delta_\beta \le 0$ implies a.s.~extinction for almost all realizations of the environment,
\item $\delta_\beta >0$ implies survival with positive probability for almost all realizations of the environment.
\end{enumerate}
Clearly the probability of survival is $0$ if and only if the conditional probability of survival is $0$ 
for almost all realizations of the environment. On the other hand, since $\pr(q(\xi)= \mathbf{1})$
is either $0$ or $1$ (see Section~\ref{sec:random}), then the probability of survival is strictly positive if and only if 
the conditional probability of survival is strictly positive 
for almost all realizations of the environment.
\end{rem}

\begin{proof}[Proof of Theorem~\ref{th:survivalRT}]
First of all we check the integrability condition, that is, for all $n \ge 1$,
\[
% \E \left  [n^{-1} \left |\log \left ( \left \| \prod_{i=n}^1 M_i \right \| \right ) \right | \right ]\equiv
% \int n^{-1} \left |\log \left ( \left \| \prod_{i=n}^1 M(t_i) \right \| \right ) \right | \prod_{i=1}^n \mu_\beta(\diff
% t_i)<+\infty,
\E \Big  [n^{-1} \Big |\log \Big ( \Big \| \prod_{i=n}^1 M_i \Big \| \Big ) \Big | \Big ]\equiv
\int n^{-1} \Big |\log \Big ( \Big \| \prod_{i=n}^1 M(t_i) \Big \| \Big ) \Big | \prod_{i=1}^n \mu_\beta(\diff
t_i)<+\infty,
\]
where $\prod_{i=1}^n \mu_\beta$ is a probability product measure on $\mathbb{R}^n$.
%In the proof of Theorem~\ref{th:survivalRT}
Below we show that if $p<1$ then
$\left \| \prod_{i=n}^1 M_i \right \| \ge (1-p)^n
\varepsilon^n$ (for some $\varepsilon >0$) while
if $a>0$ then $\left \| \prod_{i=n}^1 M_i \right \| \ge \varepsilon^n$ (for some $\varepsilon >0$).
Hence if $a+1-p>0$ , for some $\varepsilon^\prime>0$,
\[
\int n^{-1} \log^- \Big ( \Big \| \prod_{i=n}^1 M(t_i) \Big \| \Big ) \prod_{i=1}^n \mu_\beta(\diff
t_i) \le \log^-(\varepsilon^\prime)<+\infty
\]
since $\log^-$ is nonincreasing (where $\log^-(\cdot):=\max (0, -\log(\cdot))$).
%It is easy to show that $\sup \{t:\|M(t)\| \le 1\} < +\infty$, hence
Thus we just need to prove that \break
$\int n^{-1} \big |\log^+ \big ( \big \| \prod_{i=n}^1 M(t_i) \big \| \big) \big | \prod_{i=1}^n \mu_\beta(\diff
t_i)<+\infty$
(where $\log^+(\cdot):=\max (0, \log(\cdot))$). From equation~\eqref{eq:fundsolutions} we have
$\|M(t)\| \le K e^{tx^+}$; hence, since $log^+$ is nondecreasing, $\big \| \prod_{i=n}^1 M_i \big \| \le
\prod_{i=1}^n \left \|  M_i \right \|$ and the expected value of $\mu_\beta$ is finite (for all $\beta$)
we have $\int n^{-1} \big |\log^+ \big ( \big \| \prod_{i=n}^1 M(t_i) \big \| \big ) \big | \prod_{i=1}^n \mu_\beta(\diff
t_i)<+\infty$.

Suppose $p<1$ and $a>0$.
For all $t, \tau >0$ there exists $\beta_0(\tau, t)$ such that $\mu_\beta([t, +\infty))> 1-\tau$
for all $\beta > \beta_0(\tau, t)$.
By continuity and compactness we have that, for some $\varepsilon >0$,
\[
 M(t) \ge \begin{pmatrix}
           (1-p) \varepsilon & 0 \\
0 & \varepsilon
          \end{pmatrix}=:\overline M_0, \quad \forall t \ge 0
\]
where, by definition, $A \ge B$ if and only if $A_{ij} \ge B_{ij}$.
It is easy to show, by using equation~\eqref{eq:fundsolutions}, that there exists $t_p \in [0, +\infty)$ such that
\[
 M(t) \ge \begin{pmatrix}
           \frac{4}{\varepsilon(1-p)}  & 0 \\
0 & \frac{4}{\varepsilon}
          \end{pmatrix}=:\overline M_1, \quad \forall t \ge t_p.
\]
Let $\beta > \beta_0(1/2, t_p)$ and $\{M_i\}_{i \ge 1}$ the corresponding sequence
of random first-moment matrices ($M_i:=M(T_i)$); % for all $i \ge 1$); 
thus, according to the Law of Large Numbers,
with probability 1, as $n \to \infty$, $\#\{i \le n \colon M_i \ge \overline M_1 \} \ge n/2$ which implies that
$\prod_{i=n}^1 M_i \ge \overline M_0^{n/2}\, \overline M_1^{n/2} = 2^n \ident$ almost surely.
% The trick here is finding lower bound matrices which commute.
Hence $\liminf_{n \to \infty} n^{-1} \log \left ( \left \| \prod_{i=n}^1 M_i \right \| \right ) \ge \log 2 >0$
a.s.~which, according to \cite[Theorems 9.10]{cf:Tanny81}, implies survival with positive probability for almost every
realization of the environment. Hence, by definition, $\beta_1(p) \le \beta_0(1/4,t_p)$.

The usual coupling technique shows that for any fixed choice of the parameters $\lambda$, $a$, $b$,
$d_n$, $d_r$ and for any realization of the environment,
the probability of survival is nonincreasing with respect to $p$. Hence $p \mapsto \beta_c^1(p)$ is
nondecreasing.

If $p=1$ then $a>0$ and we are dealing essentially with a single
population (the persistent bacteria as in \cite{cf:GaMaSch}),
since after each killing time we have just persistent bacteria
left. The first moment, starting with a susceptible bacterium, is
$\bar r(t)$. The proof is essentially the same since $\bar r(T)
\ge \varepsilon>0$ for all $t \ge 0$ and $\bar r(T) \ge
4/\varepsilon$ for all $t \ge t_p$. The result follows from
\cite[Theorem 3.1]{cf:SW69}.
%(the additional condition can be checked as in  \cite[proof of Theorem 2]{cf:GaMaSch}).
Since $a>0$ then
there is survival starting with a persistent bacterium if and only if there is survival starting with a susceptible
one.

Finally if $a=0$ (hence $p<1$) again we are dealing essentially with a single population: the susceptible bacteria.
The first moment, starting with a susceptible bacterium,  is $\widetilde n$ and the result follows (from  \cite[Theorem 3.1]{cf:SW69}
as before) from the inequalities
$\widetilde n(t) \ge (1-p)\varepsilon$ for all $t \ge 0$ and $\widetilde n(t) \ge 4/((1-p)\varepsilon)$ for all $t \ge t_p$.
\end{proof}

\begin{proof}[Proof of Theorem~\ref{th:extinctionRT}]
If $p=1$ and $a=0$ the process becomes extinct almost surely. We suppose henceforth that $a+1-p>0$.
%Since $\log$ is concave and $\{M_i\}_{i \ge 1}$ are i.i.d., we have
Since $\log$ is increasing and $\{M_i\}_{i \ge 1}$ are identically distributed, %i.i.d., 
we have
\[
\E \Big  [n^{-1} \log \Big ( \Big \| \prod_{i=n}^1 M_i \Big \| \Big ) \Big ] \le
n^{-1} \E \Big  [\log \Big ( \prod_{i=n}^1 \Big \|  M_i \Big \| \Big ) \Big ] \le
\log (\E[\|M_1\|]).
% \E \left  [n^{-1} \log \left ( \left \| \prod_{i=n}^1 M_i \right \| \right ) \right ] \le
% n^{-1}\log \left (  \E \left  [\left \| \prod_{i=n}^1 M_i \right \| \right ) \right ] \le
% \log (\E[\|M_1\|]).
\]
Thus, if we prove that $\log (\E[\|M_1\|])<0$ for every sufficiently small $\beta$ then \cite[Theorem 9.6]{cf:Tanny81}
guarantees a.s. extinction for almost all configurations
(if $p=1$ one can also use \cite[Theorem 3.1]{cf:SW69} instead).

It is straightforward to show that $t \mapsto \|M(t)\|$ is %locally 
differentiable from the right at $0$.
By elementary computations, since $M(0)=\ident$, $\frac{\diff}{\diff t}  \log \|M(t)\|\Big |_{t=0}=
\frac{\diff}{\diff t} \|M(t)\| \Big |_{t=0} %\Big / \|M(0)\|
=:-m<0$.
Hence, there exists $t_0>0$ such that $\log (\|M(t)\|)  \le -t m/2$ for all $t \in [0,t_0]$.
On the other hand, $\log(\|M(t)\|) \le C x^+ t$ for all $t>0$ and
some $C>0$.
 Finally by equation~\eqref{eq:assumpbeta},
\[
\begin{split}
 \int \log(\|M(t)\|) \mu_\beta(\diff t) &= \int_{(0,t_0]} \log(\|M(t)\|) \mu_\beta(\diff t) +
\int_{(t_0, \infty)} \log(\|M(t)\|) \mu_\beta(\diff t) \\
& \le -\int_{(0,t_0]} \frac{tm}{2} \mu_\beta(\diff t) +
\int_{(t_0, \infty)} Cx^+ t \mu_\beta(\diff t) \\
& \le -\int_{(0,t_0]} \frac{tm}{2} \mu_\beta(\diff t) \left ( 1 -
\frac{\int_{(t_0, \infty)}  t \mu_\beta(\diff t)}{\int_{(0,t_0]} t \mu_\beta(\diff t)} \frac{2Cx^+}{m} \right )<0 \\
\end{split}
\]
for every sufficiently small $\beta$. Hence $\beta_2(p)>0$.

As in the proof of Theorem~\ref{th:extinctionRT}, for every realization of the environment,
the probability of survival is nonincreasing with respect to $p$. Hence $p \mapsto \beta_c^2(p)$ is
nondecreasing.

Let us fix $\beta >0$.
It is well-known that
\[
\lim_{n \to \infty} n^{-1} \log \Big ( \Big \| \prod_{i=n}^1 e^{A T_i} \Big \| \Big )
=\lim_{n \to \infty} n^{-1} \log \Big ( \Big \| e^{A \sum_{i=1}^n T_i} \Big \| \Big )
= %e^{
x^+ \E_\beta
%}
>0, \quad a.s.
%\lim_{n \to \infty} n^{-1} \E \left [\log \left ( \left \| \prod_{i=n}^1 e^{A T_i} \right \| \right ) \right ]
%=\lim_{n \to \infty} n^{-1} \E \left [\log \left ( \left \| e^{A \sum_{i=1}^n T_i} \right \| \right ) \right ]
%=e^{x^+ \E_\beta}>1
\]
% or, equivalently, without the expectation the result holds a.s.
($e^{x^+}$ is the maximum eigenvalue of $e^A$). 
%since without the action of the antibiotics ($p=0$) the process survives, hence
%according to \cite[Theorem 9.10]{cf:Tanny81} (see Remark~\ref{rem:kingman}) the above limit is strictly larger than 1.  
Moreover
$M(t) \ge (1-p) e^{At}$ thus $\prod_{i=n}^1 M_i \ge (1-p)^n e^{A \sum_{i=1}^n T_i}$ and
\[
\lim_{n \to \infty} n^{-1} \log \left ( \left \| \prod_{i=n}^1 M_i \right \| \right )
\ge \log (1-p) + %e^{
x^+ \E_\beta
%} 
\quad a.s.
%\lim_{n \to \infty} n^{-1} \E \left [\log \left ( \left \| \prod_{i=n}^1 M_i \right \| \right ) \right ]
%\ge \log (1-p) + e^{x^+ \E_\beta}
\]
which is eventually strictly positive, as $p \to 0$. Hence, according to
\cite[Theorem 9.10]{cf:Tanny81} the process eventually survives as $p \to 0$;
thus,  by definition, $\beta_2(p) \le \beta$ eventually as $p \to 0$.

\end{proof}

\begin{proof}[Proof of Theorem~\ref{th:tcriticalrandom}]
We use a modification of the construction shown in \cite{cf:GaMaSch}.
Given $\beta_1 \ge \beta_2$, it is well-known that, by using the classical decimation procedure,
it is possible to construct two sequences $\{\overline{T}_i^1\}_{i \ge 1}$
and $\{\overline{T}_i^2\}_{i \ge 1}$ in such a way that, for every trajectory, $\{\overline{T}^1_i(\omega) \colon i \ge 1\} \subseteq
\{\overline{T}^2_i(\omega) \colon i \ge 1\}$.

Consider the binary tree $\mathbb{T}$ whose vertices are the set $V$ of finite words of the alphabet $\{0,1\}$ and whose root
is the empty word $\emptyset$. Every nonempty word  $v_1 v_2 \ldots v_n$ is connected to its parent
$v_1 v_2 \ldots v_{n-1}$ and its two children $v_1 v_2 \ldots v_n \, 0$ and $v_1 v_2 \ldots v_n \, 1$
(the root is connected to $0$ and $1$).

To each vertex $v$ corresponds a variable $S_v\sim \mathrm{Exp}(\lambda)$ which represents the time interval between
its birth and its splitting (when it gives birth). We assume that $\{S_v\}_{v \in V}$ is an i.i.d.~family of random variables.
Define $T_\emptyset:=0$ and, for every nonempty word $v=v_1 \ldots v_n$, $T_v= \sum_{i=1}^{n-1} S_{v_1 \ldots v_i}$.
Consider now the tree $\widehat{\mathbb{T}}$ on $\mathbb{T} \times [0, +\infty)$ as follows: the set of vertices is
$\widehat V:=\{(v, T_v), (v, T_v+S_v)\colon v \in \mathbb{T}\}$.
We have vertical edges between $(v, T_v)$ and $(v, S_v+T_v)$ (for all $v \in V$); we have horizontal edges between
$(v, T_v+S_v)$ and
each of its two children
$(vw, T_v+S_v)$ where $w \in \{0,1\}$ (for all $v \in V$).
The vertical edge between $(v,T_v)$ and $(v,T_v+S_v)$
represents the time interval between the birth of the particle $v$ and its splitting time. The horizontal edge
between $(v,Tv)$ and $(v1,Tv)$ represent the birth of a child of $v$ while we consider the other particle, namely $v0$,
as $v$ itself after giving birth.

Independently of everything constructed so far, we consider four independent families of Poisson point processes $\{W^1_v\}_{v \in V}$,
$\{W^2_v\}_{v \in V}$, $\{D^1_v\}_{v \in V}$ and $\{D^2_v\}_{v \in V}$ on $[0, +\infty)$
with intensities $b$, $a$, $d_n$ and $d_r$ respectively. We color the tree in white (susceptible state),
red (persistent state) and black (dead particle) as follows: we start with a white vertex $(\emptyset, 0)$ and we extend the color
to the branches along the timeline until
we reach a point of one of the Poisson processes. If we meet a $D^1_v$ point and the current color is white we switch to black and there
are not modifications anymore in that subtree along the timeline (death of the particle),
the same happens if we meet a $D^2_v$ point and the current color is red.
If the current color is not black, then everytime we meet
a $W^2_v$ point we switch to red and everytime we meet a $W^1_v$ point we switch to white.
Black color is not modified when we meet new points in the Poisson processes.
At every split point if the current color
is white or black then we use the same color for the horizontal edges and we continue starting from the two new vertices. If the color is
red then we use the same color for the horizontal edge which connects to the child whose name ends with $0$ (and we start again from
there with a red vertex); we switch to black
for the horizontal edge connecting to the other son (hence, the whole subtree branching from this vertex is black,
since red (persistent) particles do not reproduce).

So far we modelled the natural evolution of the system; now we add
the action of the antibiotics. To this aim, independently again,
we add the coupled Poisson processes $\{\overline{T}_i^j\}_{i \ge
1}$ ($j=1,2$) defined above and we consider two independent
families of independent Bernoulli variables $\{B^1_e\}_{e \in
V\widehat{\mathbb{T}}}$ with parameter $p_1$ and $\{B^1_e\}_{e \in
V\widehat{\mathbb{T}}}$ with parameter $p_2$ where $p_1 \le p_2$
and $V\widehat{\mathbb{T}}$ is the set of vertical edges of
$\widehat{\mathbb{T}}$. There is an analogous decimation procedure
which allows to couple these two Bernoulli processes in such a way
that if $B^1_e=1$ then $B^2_e=1$. At each time $T^1_i$ (resp.~$T^2_i$)
we consider all the white vertical edges intersecting the
horizontal plane with time coordinate $T^1_i$ (resp.~$T^2_i$); for all
such edges $e$, if $B^j_e=1$ we switch to black in the
corresponding $j$ model (hence the whole subtree is black),
otherwise nothing happens.

In each model, at any time $t \ge 0$, let $N_t$ (resp.~$R_t$) be
the number of white (resp.~red) vertical edges which intersects
the horizontal plane with time coordinate $t$; $\{(N_t, R_t)\}_{t
\ge 0}$ is the formal definition of the process. It is clear that
the white/red edges in the second model have the same color in the
first one, hence the non-black portion of the tree in the second
model is a subtree of the non-black portion of the tree in the
first model. In this construction, the event of survival is the collection
of all the trees which have at least a red/white branch intersecting
the horizontal plane $t$ for every $t>0$.
This implies easily that the probability of
survival of a model $(\beta_1,p_1)$ is larger or equal than the
probability of survival of a model $(\beta_2, p_2)$ (where
$\beta_1 \ge \beta_2$ and $p_2 \ge p_1$). 
More precisely, we coupled the environments in such a way that
the conditional (w.r.~to the environment) probabilities of survival of the model $(\beta_1,p_1)$ are larger or equal than the
conditional (w.r.~to the coupled environment) probabilities of survival of the
 model $(\beta_2, p_2)$.
In particular, for any
fixed $p$, the probability of survival is nondecreasing w.r.~to
$\beta$ and, for any fixed $\beta$, is nonincreasing with
respect to $p$. 
% (more precisely, we coupled the environments in such a way that
% the conditional probabilities of survival are non decreasing w.r.~to $\beta$ and 
% nonincreasing w.r.~to $p$).
%
% Even though we can use the conditional probabilities, here we use the (annealed) probability of survival
% These are monotone in the above sense. Moreover we know that (see Remark{kingman} for details.
%
% In the construction, for any fixed environment of $(\beta_2, p_2)$-process, we have many 
% environments of the other process given by the infinite set of realization of the (two) bernoulli decimation processes.
% Let $(\xi_1,\xi_2)$ the coupled environments: the conditional probability of survival in the $j$-model is
% the probability of the set of trees with an infinite non-black subtree, say $\pr(\mathcal{A}_{j,(\xi_1,\xi_2)})$.
% We just proved that $\mathcal{A}_{j,(\xi_1,\xi_2)} \supseteq \mathcal{A}_{j,(\xi_1,\xi_2)}$. It is possible to show
% that these probabilities are measurable w.r.~to the coupling. The (annealed) probabilities of survival can be obtained
% by integration w.r.~to the coupled distribution (remember that the marginal dstributions are the distributions
% of the two environments. 
%

If we define
$\beta_c(p):=\inf \{\beta >0\colon \textrm{ the process survives with positive probability}\}$ then 
$\beta_c(p)>0$ (according to Theorem~\ref{th:extinctionRT}).
Since the probability of survival is  nondecreasing w.r.~to $\beta$, we have that for all $\beta>\beta_c(p)$ there is survival
with positive probability
and for all $\beta < \beta_c(p)$ we have almost sure extinction.
Hence $\beta_c(p)=\sup \{\beta >0\colon \textrm{ the process dies out almost surely}\}$, thus 
$\beta_c(p)<+\infty$
(according to Theorem~\ref{th:survivalRT}).
Clearly $\beta_c(p)$ is nondecreasing with respect to $p$ and,
according to Theorem~\ref{th:extinctionRT}, $\lim_{p \to 0} \beta_c(p)=0$.

\end{proof}

% \section{Final remarks}
% \label{sec:open}
%
%
% \section*{Acknowledgments}

\end{document}